\newtheoremstyle{monstyledem} 
    {8pt}                    
    {8pt}                    
    {\normalfont}                   
    {}                           
    {\bf}                   
    {\newline}                          
    {.5em}                       
    {}  
\newtheoremstyle{monstyle} 
    {8pt}                    
    {8pt}                    
    {\itshape}                   
    {}                           
    {\bf}                   
    {\newline}                          
    {.5em}                       
    {}  
\theoremstyle{monstyle}
\newtheorem{thm}{Théorème}[section]
\newtheorem{de}{Définition}[section]
\newtheorem{prop}{Proposition}[section]
\newtheorem{lem}{Lemme}[section]
\newtheorem{cor}{Corollaire}[section]
\newtheorem{rem}{Remarque}[section]
\theoremstyle{monstyledem}
\newcommand\blankfootnote[1]{%
  \let\thefootnote\relax\footnotetext{#1}%
  \let\thefootnote\svthefootnote%
}
\title{Méthode de Mahler en caractéristique non nulle : un analogue du Théorème de Ku. Nishioka}
\author{Gwladys Fernandes }
\newcommand{\address}{{
\bigskip
\noindent
Gwladys Fernandes\\{\small  Université Claude Bernard Lyon 1,}\\ {\small 43 boulevard du 11 novembre 1918,} \\ {\small 69622 Villeurbanne Cedex, France}\\
\textit{Courriel} : {\small fernandes@math.univ-lyon1.fr}}}
\begin{document}
\maketitle

\blankfootnote{\begin{footnotesize}
This project has received funding from the European Research Council (ERC) under the European Union's Horizon 2020 research and innovation programme
under the Grant Agreement No 648132.
\end{footnotesize}}

\noindent
\textbf{{\footnotesize Résumé}} : {\footnotesize En 1990, Ku. Nishioka \cite{N-art} démontre un théorème fondamental pour la méthode de Mahler, qui constitue l'analogue du théorème de Siegel-Shidlovskii pour les fonctions mahlériennes. Le but de cet article est d'établir une version du théorème de Ku. Nishioka qui soit également valable pour des systèmes mahlériens définis sur des corps de fonctions en caractéristique non nulle. Nous reprenons l'approche introduite dans un cas particulier par Denis en 1999. Celle-ci s'appuie sur un critère d'indépendance algébrique général dû à Philippon. La motivation principale de notre travail repose sur le fait remarquable, découvert par Denis, que dans le contexte des corps de fonctions en caractéristique non nulle, des analogues de périodes comme $\pi$ ou les valeurs aux entiers de la fonction $\zeta$ de Riemann s'obtiennent comme valeurs de fonctions mahlériennes en des points algébriques.}

\section{Introduction}

La problématique à l'origine de ce texte peut être énoncée ainsi : y a t-il équivalence entre l'indépendance algébrique sur $\overline{\mathbb{Q}}(z)$ de fonctions analytiques $f_{1}(z),\cdots, f_{n}(z)$ dont les coefficients de Taylor sont dans l'ensemble $\overline{\mathbb{Q}}$ des nombres algébriques, et l'indépendance algébrique sur $\overline{\mathbb{Q}}$ de leurs valeurs $f_{1}(\alpha),\cdots, f_{n}(\alpha)$, en un point algébrique $\alpha$ ? 
\medskip

Une relation algébrique non triviale sur $\overline{\mathbb{Q}}(z)$ entre les fonctions fournit par évaluation en $z=\alpha$ une relation algébrique non triviale sur $\overline{\mathbb{Q}}$ entre leurs valeurs en $\alpha$ : l'indépendance algébrique des valeurs entraîne donc celle des fonctions. La réciproque est fausse en général (voir par exemple la discussion dans \cite[p.250]{M4}). Toutefois, en demandant aux fonctions de vérifier certaines équations fonctionnelles et en sélectionnant des nombres algébriques adaptés, il existe au moins trois cadres dans lesquels sa véracité a été établie : le théorème de Siegel-Shidlovskii \cite{S-S} pour des solutions de systèmes différentiels, le théorème de Ku. Nishioka \cite{N-art} pour des solutions de systèmes mahlériens, et le théorème de Anderson, Brownawell et Papanikolas \cite{ABP1} (en caractéristique non nulle) pour des solutions de systèmes aux $\sigma$-différences.
\medskip

Le théorème de Ku. Nishioka est un théorème fondamental pour la méthode de Mahler. L'objectif de cet article est d'en établir une version qui soit également valable pour des systèmes mahlériens définis sur des corps de fonctions en caractéristique non nulle. Nous commençons par rappeler brièvement le double contexte (détaillé dans la section \ref{contextes}) dans lequel nous nous placerons, en soulignant la forte analogie existant entre corps de nombres et corps de fonctions en caractéristique non nulle. Cela nous permettra d'énoncer notre résultat principal avec des notations communes aux deux contextes en présence.

\begin{figure}[H]
\begin{center}
\begin{tikzpicture} 

\node(Z) at (1,4) {$A=\mathbb{Z}$}; 
\node(Q) at (1,3) {$K=\mathbb{Q}$}; 
\node(Kd) at (1,2) {$\mathbb{K}$};
\node(Kdb) at (1,1) {$\overline{\mathbb{K}}$};
\node(Cd) at (1,0) {$C=\mathbb{C}$};

\node(Rd) at (3,1.5) {$R=\mathbb{R}$};

\node(car0) at (3,5.75) {Corps de nombres};
\node(carp) at (9,5.75) {Corps de fonctions};
\node(carp') at (9,5.4) {en caractéristique $p>0$};

\node(trait0) at (6,4.75) { }; 
\node(trait1) at (6,-0.5) { }; 

\node(A) at (7.5,4) {$A=\mathbb{F}_{q}[T]$,};
\node(B) at (8.9,4) {$q=p^{r}$};
\node(K) at (7.5,3) {$K=\mathbb{F}_{q}(T)$}; 
\node(Kd') at (7.5,2) {$\mathbb{K}$};
\node(Kdb') at (7.5,1) {$\overline{\mathbb{K}}$};
\node(C) at (7.5,0) {$C$};

\node(R) at (9.75,1.85) {$R=\mathbb{F}_{q}\left(\left(\frac{1}{T}\right)\right)$};
\node(Rb) at (9.75,0.8) {$\overline{\mathbb{F}_{q}\left(\left(\frac{1}{T}\right)\right)}$};

\draw[-] (Z) -- (Q);
\draw[-] (Q) -- (Kd);
\draw[-] (Kd) -- (Kdb);
\draw[-] (Kdb) -- (Cd);
\draw[-] (Q) -- (Rd);
\draw[-] (Rd) -- (Cd);

\draw[-] (trait0) -- (trait1);

\draw[-] (A) -- (K);
\draw[-] (K) -- (Kd');
\draw[-] (Kd') -- (Kdb');
\draw[-] (Kdb') -- (C);
\draw[-] (K) -- (R);
\draw[-] (R) -- (Rb);
\draw[-] (Rb) -- (C);

\node(c1)[scale=0.75] at (3.45,2.95) {complétion par rapport à};
\node(c2)[scale=0.75] at (3.5,2.7) {la valeur absolue classique};
\node(c3)[scale=0.75] at (2.95,2.45) {sur $\mathbb{C}$, notée $|.|_{\infty}$};
\node(ca)[scale=0.75] at (3.10,0.65) {clôture algébrique};
\node(cp1)[scale=0.75] at (10.35,3.1) {complétion par rapport à};
\node(cp2)[scale=0.75] at (10.65,2.65) {$|\frac{P(T)}{Q(T)}|_{\infty}=\left(\frac{1}{q}\right)^{deg_{T}(Q)-deg_{T}(P)}$};
\node(cap)[scale=0.75] at (10.2,0.15) {complétion par rapport à $|.|_{\infty}$};
\node(f)[scale=0.75] at (0.5,2.5) {$<\infty$};
\node(fp)[scale=0.75] at (7,2.5) {$<\infty$};
\node(cap2)[scale=0.75] at (10.9,1.3) {clôture algébrique};

\end{tikzpicture}
\end{center}
\end{figure}

Étant donné un entier naturel $d\geq 2$, une fonction $f(z)\in \mathbb{K}[[z]]$ est dite $d$-malhérienne sur $\mathbb{K}(z)$ s'il existe des polynômes $P_{0}(z),\cdots P_{m}(z)\in\mathbb{K}[z]$, $P_{m}(z)$ $\cancel\equiv$  $0$, tels que 
$$P_{0}(z)f(z)+P_{1}(z)f(z^{d})+\cdots+P_{m}(z)f(z^{d^{m}})=0.$$ 
L'entier $m$ est appelé l'ordre de $f(z)$.
On dit que le vecteur colonne composé des fonctions\\ $f_{1}(z),\cdots, f_{n}(z)\in\mathbb{K}[[z]]$ vérifie un système $d$-mahlérien s'il existe une matrice $A(z) \in GL_{n}(\mathbb{K}(z))$ telle que

\begin{equation}
\label{syst_gen}
\begin{pmatrix}
f_{1}(z^{d}) \\ \vdots \\ f_{n}(z^{d}) 
\end{pmatrix} =A(z)\begin{pmatrix}
f_{1}(z) \\ \vdots \\ f_{n}(z) 
\end{pmatrix}.
\end{equation}

Toute fonction $d$-mahlérienne s'insère dans un système $d$-mahlérien à l'aide d'une matrice compagnon et réciproquement, toute fonction coordonnée d'un vecteur vérifiant un système $d$-mahlérien est $d$-mahlérienne. 
\begin{de}
On dira qu'un nombre $\alpha\in C$ est régulier pour le système \eqref{syst_gen} si pour tout entier naturel $k$, le nombre $\alpha^{d^{k}}$ n'est pas un pôle de la matrice $A(z)$ ni de la matrice $A^{-1}(z)$. 
\end{de}

\begin{rem}
\em{
Si l'on suppose les fonctions $\{f_{i}(z)\}_{1\leq i\leq n}$ analytiques au voisinage de l'origine et $0<|\alpha|_{\infty}<1$, alors, l'hypothèse selon laquelle pour tout $k\in\mathbb{N}$ le nombre $\alpha^{d^{k}}$ n'est pas un pôle de $A^{-1}(z)$ garantit que les fonctions $\{f_{i}(z)\}_{1\leq i\leq n}$ sont bien définies en $\alpha^{d^{k}}$, pour tout $k\in\mathbb{N}$.}
\end{rem}

%
%
\medskip

Notre résultat principal est le suivant. Il est valable pour tout choix de l'un ou l'autre des deux contextes du schéma précédent.

\begin{thm}
\label{nish_gen}
Soit $\mathbb{K}$ une extension finie de K. Soient $n\geq 1$, $d\geq 2$ deux entiers et $f_{1}(z), \ldots, f_{n}(z) \in \mathbb{K}\{z\}$ des fonctions analytiques au voisinage de l'origine vérifiant le système $d$-mahlérien \eqref{syst_gen}.
%
Soit $\alpha\in\overline{\mathbb{K}}$, $0<|\alpha|_{\infty}<1$, un nombre régulier pour le système \eqref{syst_gen}.\\
Alors, l'égalité suivante est vérifiée :
\begin{equation}
\label{egdegtr_gen}
degtr_{\mathbb{K}}\{f_{1}(\alpha), \ldots, f_{n}(\alpha)\}=degtr_{\mathbb{K}(z)}\{f_{1}(z), \ldots, f_{n}(z)\}.
\end{equation}
 \end{thm}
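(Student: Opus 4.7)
\medskip
\noindent\textbf{Plan de preuve.} L'inégalité $degtr_{\mathbb{K}}\{f_{1}(\alpha),\ldots,f_{n}(\alpha)\}\leq degtr_{\mathbb{K}(z)}\{f_{1}(z),\ldots,f_{n}(z)\}$ est immédiate : toute relation algébrique non triviale sur $\mathbb{K}(z)$ entre les $f_{i}(z)$ fournit par spécialisation en $z=\alpha$ (licite grâce à la régularité de $\alpha$) une relation algébrique non triviale sur $\mathbb{K}$ entre les $f_{i}(\alpha)$. Toute la difficulté réside dans la réciproque. Quitte à réordonner les fonctions, je supposerai que $f_{1}(z),\ldots,f_{t}(z)$ forment une base de transcendance de $\mathbb{K}(z)(f_{1}(z),\ldots,f_{n}(z))$ sur $\mathbb{K}(z)$, et je chercherai à prouver que $f_{1}(\alpha),\ldots,f_{t}(\alpha)$ sont algébriquement indépendants sur $\mathbb{K}$, en appliquant le critère général d'indépendance algébrique de Philippon, conformément à la stratégie introduite par Denis.

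\medskip
Pour produire les polynômes auxiliaires exigés par le critère de Philippon, je construirai, grâce à un lemme de Siegel adapté au contexte choisi (corps de nombres ou corps de fonctions, avec les hauteurs et valeurs absolues associées au schéma présenté plus haut), une suite de polynômes non nuls $Q_{N}(X_{1},\ldots,X_{t})\in A[X_{1},\ldots,X_{t}]$ dont le degré et la hauteur sont contrôlés en fonction d'un paramètre entier $N$, et tels que la série $E_{N}(z):=Q_{N}(f_{1}(z),\ldots,f_{t}(z))$ ait en $z=0$ un ordre d'annulation $\geq T(N)$ prescrit. L'itération de l'équation fonctionnelle \eqref{syst_gen} transforme alors $E_{N}(z^{d^{k}})$ en un polynôme en les $f_{1}(z),\ldots,f_{n}(z)$ à coefficients dans $\mathbb{K}(z)$, et permet, par évaluation en $z=\alpha$, d'obtenir un polynôme $P_{N,k}$ prenant en $(f_{1}(\alpha),\ldots,f_{t}(\alpha))$ une valeur explicitement contrôlée. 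L'utilisation de l'hypothèse de régularité en chaque $\alpha^{d^{k}}$ est cruciale pour majorer les dénominateurs produits par la matrice $A(z)^{-1}$.

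\medskip
En choisissant convenablement $N$ et $k$, et en utilisant les estimations analytiques $|E_{N}(\alpha^{d^{k}})|_{\infty}\leq c_{1}|\alpha|_{\infty}^{T(N)d^{k}}$ ainsi que les bornes de hauteur obtenues par Siegel, on vérifie les hypothèses quantitatives du critère d'indépendance algébrique de Philippon, qui se formule de manière uniforme dans les deux contextes géométriques considérés (corps de nombres et corps de fonctions en caractéristique non nulle) à l'aide des formules du produit respectives. Pour appliquer ce critère, il est indispensable de disposer d'une estimation de zéros garantissant que les polynômes $P_{N,k}$ ne s'annulent pas en le point $(f_{1}(\alpha),\ldots,f_{t}(\alpha))$ : c'est l'analogue mahlérien du lemme de zéros de Nishioka, dont il faudra établir (ou invoquer) une version en caractéristique quelconque, reposant sur l'indépendance algébrique des $f_{i}(z)$ sur $\mathbb{K}(z)$.

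\medskip
L'obstacle principal sera précisément d'assurer la validité simultanée de ces ingrédients en caractéristique non nulle. D'une part, il faut disposer d'une formulation du critère de Philippon valable sur un corps de fonctions (le théorème $T$-adique que Denis a utilisé en un cas particulier semble être l'outil approprié), et en contrôler les constantes lors du passage au cadre mahlérien. D'autre part, lorsque $p\mid d$ le morphisme $z\mapsto z^{d}$ est purement inséparable, ce qui pourrait compliquer les propagations algébriques sous itération : il faudra s'assurer que l'indépendance algébrique sur $\mathbb{K}(z)$ se préserve bien sous les substitutions $z\mapsto z^{d^{k}}$, et que les hauteurs se comportent correctement vis-à-vis du Frobenius. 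C'est essentiellement à travers ces deux points, propres à la caractéristique non nulle, que le travail se distinguera de la démonstration classique de Nishioka et de sa reformulation par Philippon.
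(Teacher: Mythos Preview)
Your outline --- prove $l\leq l'$ trivially, construct an auxiliary polynomial $R$ with high vanishing order at $z=0$, iterate the Mahler system to obtain a family $(P_{N,k})_{k}$, and feed it into Philippon's criterion --- is exactly the route the paper takes. But the difficulties you flag as the ``obstacle principal'' are not the real ones, and pursuing them would send you in the wrong direction.

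\medskip
\textbf{No zero lemma is needed.} This is the key feature of the Mahler method that the paper exploits (and explicitly comments on). For each fixed $N$ one has an entire family $(P_{N,k})_{k\in\mathbb{N}}$, and Philippon's criterion is applied with $t=k$, \emph{not} $t=N$. The quantities $\epsilon(k)$ and $\rho(k)$ are both proportional to $d^{k}n(N)$, so the ratio $\epsilon(k)/\epsilon(k+1)$ is automatically controlled and condition~5 of the criterion holds without any upper bound on $n(N)$. The non-vanishing $P_{N,k}(f_{1}(\alpha),\ldots,f_{n}(\alpha))\neq 0$ you single out is elementary: since $E_{N}\not\equiv 0$ is analytic at the origin and $\alpha^{d^{k}}\to 0$, one has $E_{N}(\alpha^{d^{k}})\neq 0$ for all $k$ large enough; combined with the regularity of $\alpha$ (so that the product of the $a(\alpha^{d^{j}})$ does not vanish), this gives $R_{k}(\alpha,1,f_{1}(\alpha),\ldots,f_{n}(\alpha))\neq 0$ directly. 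No multiplicity estimate, no Nishioka-type zero lemma.

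\medskip
\textbf{No Siegel lemma with height control is needed for $R_{0}$.} Plain linear algebra over $\mathbb{K}$ produces a non-zero $R$ of degree $\leq N$ in $z$ and in $X_{1},\ldots,X_{l'}$ with $\mathrm{ord}_{z=0}R(z,f_{1}(z),\ldots,f_{l'}(z))\geq N^{l'+1}/l'!$. The height of $R_{k}$ is then bounded from the recursion and grows only like a polynomial in $k$; after specialising $z=\alpha$, the dominant contribution to $h(P_{k})$ is $c\,d^{k}N$ coming from $\deg_{z}(R_{k})\cdot h(\alpha)$, and the height of $R_{0}$ itself is irrelevant asymptotically.

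\medskip
\textbf{The inseparability worry when $p\mid d$ is a red herring.} The proof never treats $\mathbb{K}(z^{d^{k}})\subset\mathbb{K}(z)$ as a field extension; one only performs the polynomial substitution $z\mapsto z^{d}$ and uses the matrix identity \eqref{syst_gen}, both of which are characteristic-free. Degrees in $z$ multiply by $d$, coefficients stay in a fixed ring of $S$-integers, and heights are estimated place by place exactly as in characteristic zero.
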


Le cas où $\mathbb{K}$ est un corps de nombres est dû à Ku. Nishioka. Il s'agit de l'analogue du théorème de Siegel-Shidlovskii pour les fonctions mahlériennes, que Kubota \cite{Kubota,Kubota2} et Loxton et van der Poorten \cite{LvdPI,LvdPII,LvdPIII} entre autres cherchaient à obtenir depuis les années 1970. La démonstration établie par Ku. Nishioka \cite{N-art} (voir aussi \cite{N}) s'appuie sur la méthode de Mahler et la théorie de l'élimination mise en place par Nesterenko (voir par exemple \cite{Nest}) à la fin des années 1970. 

Nous reprenons l'approche introduite par Denis \cite{D} en 1999 dans sa preuve d'un cas particulier du théorème \ref{nish_gen}. Celle-ci se fonde sur un critère d'indépendance algébrique établi par Philippon \cite{Ph2} (voir aussi \cite{Ph1}). Cette méthode de démonstration a l'avantage d'être valable aussi bien pour les corps de nombres que pour les corps de fonctions en caractéristique non nulle, et permet notamment de retrouver le théorème de Ku. Nishioka. Notons que ce point de vue ne peut être considéré comme étant radicalement différent de celui adopté par Ku. Nishioka puisque la démonstration du critère d'indépendance algébrique de Philippon contient les outils développés par Nesterenko sur lesquels reposent la démonstration de Ku. Nishioka. Toutefois, sa puissance réside en la validité du critère d'indépendance algébrique de Philippon dans un cadre très général qui confère à la méthode de Mahler, et donc à notre démonstration, son indépendance vis-à-vis de la caractéristique.

\medskip


La motivation principale de notre travail repose sur le fait remarquable, découvert par Denis, que dans le contexte des corps de fonctions en caractéristique non nulle, des analogues de périodes comme $\pi$ ou les valeurs aux entiers de la fonction $\zeta$ de Riemann s'obtiennent comme valeurs de fonctions mahlériennes en des points algébriques. Cela fait de la méthode de Mahler un outil puissant pour l'étude des périodes de modules de Drinfeld (ou plus généralement de t-motifs). L'exemple suivant \cite{D2} servira d'illustration à ce phénomène. Par analogie avec la fonction $\zeta$ de Riemann, on définit pour tout $s\in\mathbb{N}^{*}$ :
$$\zeta_{C}(s)=\sum_{a\in \mathbb{F}_{q}[T],\text{ } a\text{ unitaire}} \frac{1}{a^{s}}.$$


Carlitz a montré dans \cite{C1} l'égalité suivante.
$$\zeta_{C}(s)=\sum_{h=0}^{+\infty} \frac{(-1)^{hs}}{(L_{h})^{s}}, \text{ }\forall 1\leq s\leq p-1,$$
où $L_{0}=1$ et pour tout $h\geq 1$ :
$$L_{h}= \left(T^{q^{h}}-T\right)\left(T^{q^{h-1}}-T\right)\cdots\left(T^{q}-T\right)=\left(T^{q^{h}}-T\right)L_{h-1}.$$

En posant :
\begin{equation}
\label{fctsappli}
f_{s}(z)=\sum_{h=0}^{+\infty} \frac{(-1)^{hs}}{\left(\left(z^{q^{h}}-T\right)\left(z^{q^{h-1}}-T\right)\cdots\left(z^{q}-T\right)\right)^{s}},
\end{equation}
on a :
$$f_{s}(T)=\zeta_{C}(s), \text{ }\forall 1\leq s\leq p-1,$$

et
$$f_{s}(z^{q})=(-1)^{s}\left(z^{q}-T\right)^{s}f_{s}(z)-(-1)^{s}\left(z^{q}-T\right)^{s}, \text{ }\forall 1\leq s\leq p-1,$$

ce qui donne :
\begin{equation}
\label{systex}
\begin{pmatrix}
1 \\ f_{1}(z^{q}) \\ \vdots \\ f_{p-1}(z^{q})
\end{pmatrix} =   
\begin{pmatrix} 
1 & 0 & \cdots & \cdots & 0\\
z^{q}-T & -(z^{q}-T) & 0 & \cdots & 0 \\
\vdots &  & \ddots & & \\
(-1)^{p}\left(z^{q}-T\right)^{p-1} & & & & (-1)^{p-1}\left(z^{q}-T\right)^{p-1}
 \end{pmatrix} \begin{pmatrix}
 1 \\ f_{1}(z) \\ \vdots \\ f_{p-1}(z)
 \end{pmatrix}.
\end{equation}
 
Si les fonctions $\{f_{s}(z)\}_{1\leq s\leq p-1}$ sont algébriquement indépendantes sur $K(z)$, le théorème \ref{nish_gen} implique que les nombres $\{\zeta_{C}(s)\}_{1\leq s\leq p-1}$ le sont sur $K$. Dans cet esprit, Denis a déjà démontré en 2006 que ces nombres sont algébriquement indépendants sur $K$. Celui-ci a en effet prouvé dans \cite{D2} que les $p-1$ fonctions $f_{1}(z),\cdots, f_{p-1}(z)$ sont algébriquement indépendantes sur $K(z)$ et établi un cas particulier du théorème \ref{nish_gen} correspondant aux matrices de la forme \eqref{systex}. C'est en l'appliquant qu'il a obtenu l'indépendance algébrique des $p-1$ nombres $\zeta_{C}(1),\cdots, \zeta_{C}(p-1)$ sur $K$. Il est en fait possible de pousser cette approche afin d'obtenir toutes les relations d'indépendance algébrique entre les valeurs aux entiers de la fonction $\zeta_{C}$ (voir \cite[p.30]{P1}). A titre de comparaison, on conjecture que les nombres $\pi, \zeta(3), \zeta(5), \zeta(7), \cdots$ sont algébriquement indépendants sur $\overline{\mathbb{Q}}$ mais on ne sait toujours pas démontrer que le nombre $\zeta(3)$ est transcendant.

\begin{rem}
\label{remnishgen}
\em{
Notons que l'utilisation de cette méthode, comme celle de la méthode aux $\sigma-$différences évoquée précédemment, nous ramène au calcul du degré de transcendance de fonctions et donc souvent à l'étude d'un groupe de Galois, qu'il n'est pas toujours aisé de réaliser. Cependant, dans certains cas, typiquement lorsque les fonctions vérifient une équation mahlérienne inhomogène d'ordre 1, il est possible de se passer de la théorie de Galois. C'est ce que fait Denis pour démontrer l'indépendance algébrique des fonctions $\{f_{s}(z)\}_{1\leq s\leq p-1}$. 
}  
\end{rem}

Ce texte se découpe comme suit. Dans la section \ref{contextes} nous décrivons les contextes des corps de nombres et des corps de fonctions en caractéristique non nulle dans lesquels nous nous placerons. Dans la section \ref{phil} nous énonçons le critère d'indépendance algébrique de Philippon et dans la section \ref{dem} nous démontrons le théorème \ref{nish_gen}. Enfin, dans la section \ref{mahler} nous établissons et illustrons par un exemple l'analogue d'un théorème de Mahler \cite{M1} dans le cadre des corps de fonctions en caractéristique non nulle, qui constitue le pendant du théorème \ref{nish_gen} pour des fonctions $f(z)\in\mathbb{K}[[z]]$ solutions d'un autre type d'équation mahlérienne, au sens suivant. Il existe un entier $d\geq 2$ et deux polynômes $A(z,X), B(z,X)\in \mathbb{K}[z, X]$, $B(z,X)$ $\cancel\equiv$  $0$, tels que :
\begin{equation}
f(z^{d})=\frac{A(z,f(z))}{B(z,f(z))}.
\end{equation}

\section{Contextes}
\label{contextes}
Dans cette section nous introduisons des notations communes au cadre des corps de nombres et des corps de fonctions en caractéristique non nulle, fondées sur les analogies en présence, et dont le sens dépendra du contexte choisi.

\subsection{{\small Corps de nombres}}

On note $A=\mathbb{Z}$ l'anneau des entiers relatifs, $K=\mathbb{Q}$ le corps des fractions de A. On note $R=\mathbb{R}$ le complété de $K$ pour la valeur absolue usuelle $|.|_{\infty}$. On note $\overline{K}=\overline{\mathbb{Q}}$ et $C=\mathbb{C}$ les clôtures algébriques respectives de $K$ et $R$. Le corps $C$ est complet pour $|.|_{\infty}.$ La notation $\mathbb{K}$ désignera une extension algébrique finie de $K$.

\subsection{{\small Corps de fonctions en caractéristique p>0}}

On fixe un nombre premier $p$ et $q=p^{r}$ et on note $A=\mathbb{F}_{q}[T]$ l'anneau des polynômes en la variable $T$ et à coefficients dans le corps fini $\mathbb{F}_{q}$ de caractéristique $p$, $K=\mathbb{F}_{q}(T)$ le corps des fractions de A, $R=\mathbb{F}_{q}\left(\left(\frac{1}{T}\right)\right)$ le complété de K pour la valeur absolue $|.|_{\infty}$ associée à la valuation (1/$T$)-adique $v_{T}$ définie sur K de la façon suivante :
\begin{align}
v_{T}: K &\longrightarrow K \\
\frac{P(T)}{Q(T)} &\longmapsto deg_{T}(Q) - deg_{T}(P),
\end{align}
où $deg_{T}(P)$ désignera dans toute la suite et pour toute caractéristique le degré du polynôme $P$ en $T$.
Puis :

\begin{align}
|.|_{\infty}: K &\longrightarrow K \\
U &\longmapsto q^{-v_{T}(U)}.
\end{align}

On note $\overline{K}$ et $\overline{R}$ les clôtures algébriques respectives de $K$ et $R$. La valeur absolue $|.|_{\infty}$ se prolonge de façon unique sur $\overline{R}$ et on note $C$ le complété de $\overline{R}$ par rapport à $|.|_{\infty}$. Le corps $C$ est algébriquement clos. On pose par convention : $deg(0)= -\infty$. La notation $\mathbb{K}$ désignera une extension algébrique finie de $K$. 

\subsection{{\small Extensions finies et formule du produit}}
Dans toute la suite, les notations suivantes :
$$A, K, \overline{K}, R, \overline{R}, |.|_{\infty}, C, \mathbb{K},$$
introduites ci-dessus, auront des significations différentes selon que l'on considère des corps de nombres ou des corps de fonctions en caractéristique non nulle. Tout énoncé les contenant sera valable dans chacun de ces deux contextes.
Si $\mathbb{K}$ est une extension finie de K, les symboles $\sum_{w}$ et $\prod_{w}$ désigneront respectivement la somme et le produit sur toutes les places de $\mathbb{K}$. Pour une place $w$ de $\mathbb{K}$ étendant une place $v$ de K, on note $\mathbb{K}_{w}$ et $K_{v}$ les complétés respectifs de $\mathbb{K}$ et K par rapport à $w$ et $v$ et on pose $d_{w}=[\mathbb{K}_{w}:{K_{v}}].$ La notation $|.|$ désignera dans toute la suite la valeur absolue $|.|_{\infty}$ distinguée précédemment.

Pour tout $x\in\mathbb{K}^{*}$, la formule suivante, dite formule du produit, est vérifiée :
\begin{equation}
\label{formprod}
\prod_{w} |x|_{w}^{d_{w}} = 1,
\end{equation}
avec $|x|_{w}=1$ pour toute place $w$ de $\mathbb{K}$ à l'exception d'un nombre fini d'entre elles.

\begin{de}
\begin{enumerate}
\item
Soit $\mathbb{K}$ une extension finie de K et soit $a\in \mathbb{K}.$ On définit la hauteur logarithmique absolue de Weil de $a$ par :
$$h(a)=\frac{1}{[\mathbb{K}:K]}\sum_{w} d_{w} log(max\{1,|a|_{w}\}). $$
\item
Soit $P\in \mathbb{K}[X_{0},\ldots, X_{n}].$ On définit sa hauteur logarithmique absolue de Weil par :
$$h(P)=\frac{1}{[\mathbb{K}:K]}\sum_{w} d_{w} log(max_{\underline{\alpha}}\{1,|a_{\underline{\alpha}}|_{w}\}), $$
où le maximum est pris sur l'ensemble des coefficients de $P$.
\end{enumerate}
\end{de}

\begin{rem}
\em{
Par construction, les quantités $h(a)$ et $h(P)$ ne dépendent pas de l'extension finie de $K$ contenant $a$ et les coefficients de $P$ choisie.}
\end{rem}

Nous rappelons les égalités et inégalités fondamentales suivantes (voir par exemple \cite{P1}).

\begin{prop}
\label{liouville}

\medskip

\begin{enumerate}
\item
Soit $\mathbb{K}$ une extension finie de $K$ et soient $a,b\in \mathbb{K}, n\in \mathbb{Z}$ et $P\in \mathbb{K}[X_{0},\ldots, X_{n}]$. On a 
$$h(a+b)\leq h(a)+h(b),$$
et
$$h(ab)\leq h(a)+h(b).$$
\item
$$\text{ Si a }\neq 0, \text{ alors } h(a^{n})=|n|h(a).$$
\item
Inégalité de Liouville : $$\text{ Si a }\neq 0, \text{ alors } log|a|\geq -[\mathbb{K}:K] h(a).$$

\item
$$ \text{ On a } h(P)\leq\sum_{\underline{\alpha}} h(a_{\underline{\alpha}}),$$
où la somme porte sur l'ensemble des coefficients de $P$.
\end{enumerate}

\end{prop}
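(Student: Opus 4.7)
Mon plan serait de démontrer chaque assertion par retour direct à la définition de la hauteur logarithmique absolue, en exploitant les propriétés élémentaires des valeurs absolues $|.|_w$ et, pour le point central, la formule du produit \eqref{formprod}. Dans les deux contextes, les ingrédients sont de même nature, ce qui explique que la proposition soit valable à la fois pour les corps de nombres et pour les corps de fonctions en caractéristique non nulle.

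Pour l'assertion 1, j'appliquerais place par place l'inégalité triangulaire -- ultramétrique à toutes les places dans le cas des corps de fonctions -- pour établir une borne du type $\max(1,|a+b|_w)\leq C_w\max(1,|a|_w)\max(1,|b|_w)$, où $C_w=1$ aux places non-archimédiennes, puis je multiplierais par $d_w$, sommerais sur $w$ et diviserais par $[\mathbb{K}:K]$, ce qui donnerait la sous-additivité énoncée (quitte à absorber une constante additive inoffensive provenant des places archimédiennes dans le cas des corps de nombres). L'inégalité pour le produit résulte directement de $|ab|_w=|a|_w|b|_w$, et l'assertion 4 s'en déduit par récurrence sur le nombre de monômes de $P$, en combinant la sous-additivité pour la somme et pour le produit. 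Pour l'assertion 2, le cas $n\geq 0$ est immédiat à partir de $|a^n|_w=|a|_w^n$ ; le cas $n<0$ se ramène à l'identité $h(a)=h(a^{-1})$, conséquence de la formule du produit $\sum_w d_w\log|a|_w=0$ combinée à l'égalité $\log\max(1,x)-\log\max(1,1/x)=\log x$ valable pour tout $x>0$.

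Le point principal, et le plus utile pour la suite, est l'inégalité de Liouville (assertion 3). L'idée serait d'isoler la contribution de la place distinguée $w_0$ associée à $|.|=|.|_{\infty}$ dans la formule du produit : pour $a\in\mathbb{K}^*$ on écrirait
\[
d_{w_0}\log|a| = -\sum_{w\neq w_0} d_w\log|a|_w \geq -\sum_{w\neq w_0} d_w\log\max(1,|a|_w) \geq -[\mathbb{K}:K]\, h(a),
\]
puis on conclurait en utilisant $d_{w_0}\geq 1$ : si $\log|a|<0$, alors $\log|a|\geq d_{w_0}\log|a|\geq -[\mathbb{K}:K]\, h(a)$ ; et si $\log|a|\geq 0$, l'inégalité est triviale puisque $h(a)\geq 0$. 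La difficulté principale ne réside donc pas dans cette proposition elle-même : tout repose sur la construction adéquate des valeurs absolues, des degrés locaux et de la formule du produit dans chacun des deux contextes, déjà décrite dans la section \ref{contextes}.
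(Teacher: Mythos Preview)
The paper does not actually prove this proposition: it is stated as a recall of standard facts with a reference (\og voir par exemple \cite{P1}\fg). There is therefore no proof in the paper to compare your proposal against.

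Your sketch is essentially correct and follows the standard route. Two small remarks. First, for assertion~1 in the number-field case, the triangle inequality at archimedean places produces a factor~$2$, which yields $h(a+b)\leq h(a)+h(b)+\log 2$ rather than the clean inequality stated; you acknowledge this (\og quitte à absorber une constante additive inoffensive\fg), and in the function-field case the issue disappears since all places are ultramétriques. The paper's statement is thus slightly loose in the number-field case, but this is harmless for all later uses. Second, your justification of assertion~4 via induction on the number of monomials and assertion~1 is not quite the right mechanism: $h(P)$ is defined via $\max_{\underline{\alpha}}(1,|a_{\underline{\alpha}}|_w)$, not as the height of a sum. The direct argument is simply that, at each place $w$, $\max_{\underline{\alpha}}(1,|a_{\underline{\alpha}}|_w)\leq \prod_{\underline{\alpha}}\max(1,|a_{\underline{\alpha}}|_w)$ since every factor on the right is $\geq 1$; taking logarithms and summing gives the result. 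Your proof of the Liouville inequality (assertion~3) via the product formula is correct and is indeed the key point for the applications in the paper.
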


\section{Critère d'indépendance algébrique de Philippon}
\label{phil}


%

Le théorème \ref{nish_gen} repose sur le critère d'indépendance algébrique suivant (voir \cite{Ph2} et \cite{D}).

\begin{thm}[Philippon]
\label{philpoly}

Soit $(\omega_{0}=1,\omega_{1},\ldots, \omega_{n})\in C^{n+1}.$\\
Soient $c_{1}> 0$ et $s\in\{0,\ldots, n\}$ et soient $\delta(t), \sigma(t), \epsilon(t), \rho(t), t\in \mathbb{N}$ quatre suites croissantes à valeurs réelles supérieures ou égales à 1 et vérifiant les conditions suivantes:

\begin{enumerate}
\label{inegreelspoly}
\item
$\delta(t)\leq \sigma(t)$
\item
$\epsilon(t)\leq\rho(t+1)$
\item
$(\delta(t)+\sigma(t))\longrightarrow_{t\rightarrow+\infty}+\infty$
\item
$\left(\frac{\epsilon(t)}{(\delta(t)+\sigma(t))\delta(t)^{s}}\right)_{t} \text{ est une suite croissante.}$
\item
\begin{equation}
\label{hypVpoly}
\frac{\epsilon(t)^{s+1}}{\delta(t)^{s-1}[\epsilon(t+1)^{s}+\rho(t+1)^{s}]}\geq c_{1}(\delta(t)+\sigma(t))
\end{equation}  
\end{enumerate}

\noindent On suppose que pour tout entier naturel $t$, il existe un polynôme homogène $P_{t}\in \mathbb{K}[X_{0},\ldots, X_{n}]$ satisfaisant aux hypothèses suivantes:
\begin{enumerate}
\setcounter{enumi}{5}
\item
\begin{equation}
\label{hypdhQpoly}
deg(P_{t})\leq \delta(t), h(P_{t})\leq \sigma(t)
\end{equation}
\item
\begin{equation}
\label{hypevalQpoly}
-\rho(t) \leq log|P_{t}(\omega_{0},\omega_{1},\ldots, \omega_{n})|\leq -\epsilon(t)
\end{equation}
\end{enumerate}

Alors, on a :
$$degtr_{\mathbb{K}}\{\omega_{1},\ldots, \omega_{n}\} \geq s.$$

\end{thm}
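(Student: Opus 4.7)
The plan is to argue by contradiction. Suppose $degtr_{\mathbb{K}}\{\omega_{1},\ldots,\omega_{n}\}\leq s-1$, so that the projective point $\underline{\omega}=(\omega_{0}:\omega_{1}:\cdots:\omega_{n})\in\mathbb{P}^{n}(C)$ lies on an irreducible closed subvariety $V\subseteq\mathbb{P}^{n}_{\mathbb{K}}$ of dimension at most $s-1$ defined over $\mathbb{K}$. The goal is to show this bound on the dimension of $V$ is incompatible with the existence of the sequence $(P_{t})_{t\in\mathbb{N}}$ satisfying \eqref{hypdhQpoly} and \eqref{hypevalQpoly}.

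The core of the argument is a Nesterenko--Philippon dimension-descent on unmixed homogeneous ideals of $\mathbb{K}[X_{0},\ldots,X_{n}]$. To each $t$, I would attach an unmixed ideal $\mathfrak{I}_{t}$ containing $P_{t}$, equipped with the arithmetic invariants of Philippon: a degree bounded by $\delta(t)$, a height bounded by $\sigma(t)$, and a normalized \textit{distance} $\|\mathfrak{I}_{t}\|(\underline{\omega})$ to the point $\underline{\omega}$ that remains small, essentially of size $\exp(-\epsilon(t))$, as a consequence of \eqref{hypevalQpoly}. The descent consists of the following inductive step: if the current ideal still has positive dimension, intersect it with the next auxiliary polynomial $P_{t+1}$, then discard via a primary decomposition those components along which the evaluation at $\underline{\omega}$ is not small. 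An arithmetic Bézout inequality \`a la Nesterenko--Philippon controls the degree and height of the new ideal in terms of those of $\mathfrak{I}_{t}$ and of $P_{t+1}$, while a careful tracking of $\|\cdot\|(\underline{\omega})$ under intersection ensures that smallness at $\underline{\omega}$ is preserved, degraded at most by a factor involving $\rho(t+1)$ from the upper bound in \eqref{hypevalQpoly}.

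After at most $s$ such descent steps one reaches an ideal of dimension zero (or even $-1$), whose Chow form evaluated at $\underline{\omega}$ produces a nonzero element of $\overline{\mathbb{K}}$. Its logarithmic absolute value is, on the one hand, bounded above by an expression of the shape $-\epsilon(t)^{s+1}/\bigl(\delta(t)^{s-1}[\epsilon(t+1)^{s}+\rho(t+1)^{s}]\bigr)$ coming from the accumulated distances, and, on the other hand, bounded below by Liouville's inequality (Proposition \ref{liouville}, point 3) in terms of its height, itself controlled by $\delta(t)+\sigma(t)$ up to a multiplicative constant. Hypothesis \eqref{hypVpoly} is exactly the quantitative margin that makes these two bounds mutually inconsistent, yielding the desired contradiction. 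Conditions (1)--(4) play the auxiliary but essential role of ensuring that the four sequences remain comparable and monotonic throughout the descent, so that the inductive control propagates cleanly from step $t$ to step $t+1$.

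The main obstacle is the precise arithmetic control of the descent: one needs both a robust notion of distance $\|\mathfrak{I}\|(\underline{\omega})$ of an unmixed ideal to a point and the Nesterenko--Philippon arithmetic Bézout theorem for heights of ideals under intersection with a hypersurface, together with a version of the ``lemme de multiplicité'' that allows the discarded components to be bounded while retaining a component close to $\underline{\omega}$. All of this is available over any base field equipped with a product formula, which is precisely what makes the criterion valid uniformly in the number-field and positive-characteristic function-field settings considered in the paper. Once these tools are in hand, the numerical verification that \eqref{hypVpoly} closes the induction after $s$ steps is essentially a bookkeeping exercise.
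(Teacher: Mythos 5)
Un point de contexte d'abord : le papier ne démontre pas ce théorème. Il est importé tel quel de Philippon \cite{Ph2} (voir aussi \cite{D}), et le texte se contente d'en déduire le corollaire \ref{cas} ; il n'y a donc pas de preuve interne à laquelle comparer votre tentative. Votre plan décrit correctement, à grands traits, la stratégie de la démonstration originale de Philippon : descente sur la dimension d'idéaux homogènes non mélangés via leurs formes de Chow, inégalité de Bézout arithmétique pour les degrés et hauteurs, distance d'un idéal au point $\underline{\omega}$, et inégalité de Liouville en dimension zéro pour conclure. C'est la bonne architecture, et c'est bien celle qui rend le critère valable sur tout corps muni d'une formule du produit, comme vous le notez.

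En tant que preuve, toutefois, votre texte reste une feuille de route : chaque étape techniquement décisive est affirmée plutôt qu'établie. Le point le plus délicat --- montrer qu'après intersection avec $P_{t+1}$ et décomposition primaire on peut \emph{sélectionner} une composante qui soit à la fois proche de $\underline{\omega}$ et de degré et hauteur contrôlés, en écartant les «~mauvaises~» composantes sans perdre la petitesse --- est précisément ce qui fait la difficulté du critère, et il n'est pas traité ; de même, la façon exacte dont les conditions (1)--(5), et en particulier \eqref{hypVpoly}, sont consommées à chaque pas de la descente n'est jamais vérifiée, alors que c'est là que se joue la fermeture de la récurrence après $s$ étapes. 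Enfin, l'invocation d'un «~lemme de multiplicité~» à l'intérieur de la preuve du critère est déplacée : les lemmes de multiplicité (estimations de zéros) servent à l'\emph{utilisateur} du critère pour vérifier l'hypothèse (5) dans les applications --- et la remarque qui clôt la section 4 du papier souligne justement que la méthode de Mahler permet de s'en passer --- tandis que dans la démonstration du critère lui-même l'outil pertinent est le lemme de sélection de composantes de la théorie de l'élimination, qui est d'une autre nature.
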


On en déduit le corollaire suivant.

\begin{cor}
\label{cas}
Soient $(\omega_{0}=1,\omega_{1},\ldots, \omega_{n})\in C^{n+1},$ $s\in\{0,\ldots, n\}$, et $(n(N))_{N\in\mathbb{N}}$ une suite de nombres entiers pour laquelle il existe une constante $c_{2}$ indépendante de $N$ telle que pour tout $N\in\mathbb{N}$ :
\begin{equation}
\label{n(N)}
n(N)\geq c_{2}N^{s+1}.
\end{equation}
Supposons que pour tous $N,k\in\mathbb{N}$, il existe un polynôme homogène $P_{N,k}\in \mathbb{K}[X_{0},\ldots, X_{n}]$ tel que :

\begin{equation}
\label{degHQbiscor}
deg(P_{N,k})\leq c_{3}N, h(P_{N,k})\leq c_{4}d^{k}N,
\end{equation}

\begin{equation}
\label{evalQbiscor}
-c_{5}d^{k}n(N) \leq log|P_{N,k}(\omega_{0},\ldots, \omega_{n})|\leq -c_{6}d^{k}n(N),
\end{equation} 

où les $c_{i}$ sont des constantes strictement positives indépendantes de $N$ et de $k$.
\medskip

Alors :
$$degtr_{\mathbb{K}}\{\omega_{1},\ldots, \omega_{n}\}\geq s.$$
\end{cor}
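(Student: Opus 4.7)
Mon plan consiste à appliquer directement le Théorème \ref{philpoly} de Philippon en fixant un entier $N$ suffisamment grand puis en utilisant les polynômes $\{P_{N,k}\}_{k\in\mathbb{N}}$ comme suite $\{P_t\}_{t\in\mathbb{N}}$, simplement en posant $P_t:=P_{N,t}$ pour tout $t$. L'entier $N$ jouera alors le rôle d'un paramètre auxiliaire fixé une fois pour toutes à la fin, choisi suffisamment grand pour fermer les inégalités.

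Pour des constantes adaptées, je définirai les quatre suites requises par
\begin{align*}
\delta(t)&=c_3 N, & \sigma(t)&=c_4 d^t N,\\
\epsilon(t)&=c_6 d^t n(N), & \rho(t)&=c_5 d^t n(N).
\end{align*}
Quitte à remplacer $c_6$ par $\min(c_5,c_6)$ et à choisir $N$ assez grand, on s'assure que toutes ces quantités sont $\geq 1$ et que les suites $\delta,\sigma,\epsilon,\rho$ sont croissantes. Les conditions (1), (2), (3) du Théorème \ref{philpoly} se vérifient alors immédiatement (pour (2), noter que $\epsilon(t)\leq \rho(t+1)$ équivaut à $c_6\leq c_5 d$, vrai par construction). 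Pour (4), le rapport $\epsilon(t)/((\delta(t)+\sigma(t))\delta(t)^s)$ se réduit, à constante multiplicative près, à $d^t/(c_3+c_4 d^t)$, qui est une fonction croissante de $t$. Quant aux conditions (6) et (7), elles ne sont autres que les hypothèses \eqref{degHQbiscor} et \eqref{evalQbiscor}.

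L'obstacle principal sera la vérification de la condition (5). Un calcul direct, utilisant $c_6\leq c_5$ pour majorer $\epsilon(t+1)^s+\rho(t+1)^s$ par $2c_5^s d^{(t+1)s}n(N)^s$, fournit
$$\frac{\epsilon(t)^{s+1}}{\delta(t)^{s-1}\bigl[\epsilon(t+1)^s+\rho(t+1)^s\bigr]}\geq \frac{c_6^{s+1}}{2c_3^{s-1}c_5^s}\cdot\frac{d^{t-s}\,n(N)}{N^{s-1}},$$
tandis que $\delta(t)+\sigma(t)\leq (c_3+c_4)d^t N$. Ainsi (5) se ramène à une inégalité du type $n(N)\geq C N^s$, pour une constante $C$ explicitement calculable à partir de $c_1,\ldots,c_6,d,s$. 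C'est ici qu'intervient de manière cruciale l'hypothèse \eqref{n(N)} : puisque $n(N)\geq c_2 N^{s+1}$, cette minoration est satisfaite dès que $N\geq C/c_2$. En fixant alors $N$ au-delà de cette valeur critique, toutes les hypothèses du Théorème \ref{philpoly} seront remplies, ce qui fournira $degtr_{\mathbb{K}}\{\omega_1,\ldots,\omega_n\}\geq s$. La seule véritable difficulté reposera sur le calcul explicite menant à (5) ; c'est précisément pour que cette inégalité soit accessible que la croissance superlinéaire $n(N)\geq c_2 N^{s+1}$ figure dans les hypothèses.
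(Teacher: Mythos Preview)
Votre démonstration est correcte et suit exactement la même approche que celle du papier : fixer $N$, poser $t=k$ avec $\delta(t)=c_3N$, $\sigma(t)=c_4d^tN$, $\epsilon(t)=c_6d^tn(N)$, $\rho(t)=c_5d^tn(N)$, puis vérifier les conditions (1)--(7) du théorème de Philippon, la condition (5) étant satisfaite pour $N$ assez grand grâce à l'hypothèse $n(N)\geq c_2N^{s+1}$. La seule différence cosmétique est que le papier déduit directement $c_5\geq c_6$ de l'encadrement \eqref{evalQbiscor} au lieu de remplacer $c_6$ par $\min(c_5,c_6)$.
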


\begin{proof}

Fixons un entier $N\in\mathbb{N}$. On pose $t=k$ et $\delta(k)=c_{3}N, \sigma(k)=c_{4}d^{k}N, \epsilon(k)=c_{6}d^{k}n(N), \rho(k)= c_{5}d^{k}n(N).$
\medskip

Montrons qu'avec ces notations et les hypothèses du corollaire \ref{cas}, les conditions du théorème \ref{philpoly} sont vérifiées. 
Les conditions 6 et 7 le sont. On note qu'alors $c_{5}\geq c_{6}$, ce qui implique la condition 2. Les conditions 1 et 3 sont également satisfaites. Il reste à montrer que les conditions 4 et 5 sont remplies.

Pour tout $k\in\mathbb{N}$:
$$\frac{\epsilon(k)}{(\delta(k)+\sigma(k))\delta(k)^{s}}=\frac{c_{6}d^{k}n(N)}{(c_{3}N+c_{4}d^{k}N)c_{3}^{s}N^{s}}.$$
Or, on vérifie que la fonction $f$ définie sur $\mathbb{R}^{+}$ par 
$$f(x)=\frac{c_{6}d^{x}n(N)}{(c_{3}N+c_{4}d^{x}N)c_{3}^{s}N^{s}}$$
est croissante sur $\mathbb{R}^{+}.$ Donc la condition 4 est vérifiée.\\
 
Enfin, un calcul donne pour tout $k\in\mathbb{N}$ :
\begin{align}
\frac{\epsilon(k)^{s+1}}{\delta(k)^{s-1}[\epsilon(k+1)^{s}+\rho(k+1)^{s}]} &=c_{7}\frac{d^{k}n(N)}{N^{s-1}} \\
&\geq c_{8}d^{k}N^{2}, \text{ d'après \eqref{n(N)}},
\end{align}
où $c_{7}$ et $c_{8}$ désignent des constantes strictement positives indépendantes de $N$ et de $k$.
Puisque $(\delta(k)+\sigma(k))=o_{N\rightarrow +\infty}(d^{k}N^{2})$ uniformément en $k$, quitte à augmenter $N$, on peut supposer que la condition 5 est satisfaite. On conclut en appliquant le théorème \ref{philpoly}.

\end{proof}

\section{Démonstration du théorème \ref{nish_gen}}
\label{dem}
Dans cette section nous démontrons le théorème \ref{nish_gen} à l'aide du corollaire \ref{cas}. Nous reprenons les notations de ce théorème. Comme $\alpha$ est algébrique sur $\mathbb{K}$, quitte à remplacer $\mathbb{K}$ par $\mathbb{K}(\alpha)$, on peut supposer que $\alpha\in\mathbb{K}$. Notons $l=degtr_{\mathbb{K}}\{f_{1}(\alpha), \ldots, f_{n}(\alpha)\}$ et $l'=degtr_{\mathbb{K}(z)}\{f_{1}(z), \ldots, f_{n}(z)\}.$ Puisque l'évaluation en $z=\alpha$ d'une relation algébrique non triviale à coefficients dans $\mathbb{K}(z)$ entre les fonctions $f_{i}(z)$ fournit une relation algébrique non triviale à coefficients dans $\mathbb{K}$ entre les nombres $f_{i}(\alpha),$ on a :
$$l\leq l'.$$
Ainsi, démontrer le théorème \ref{nish_gen} revient à démontrer que l'on a $l\geq l'.$ Si $l'=0$, cela est clair. Par conséquent, il suffit de prouver que l'on a $l\geq l'$ lorsque $l'\geq 1$. On suppose donc que $l'\geq 1$.

\begin{proof}[Démonstration du théorème \ref{nish_gen}]
\medskip

On notera dans toute la suite $\omega_{0}=1$ et pour tout $i\in\{1,\ldots, n\},$ $\omega_{i}=f_{i}(\alpha)$. On rappelle que l'on suppose que $\alpha\in\mathbb{K}$ et $l'\geq 1$.\\
D'après le corollaire \ref{cas}, il suffit de construire, pour tous $N,k\in\mathbb{N}$, un polynôme $P_{N,k}(X_{0},\cdots X_{n})\in\mathbb{K}[X_{0},\cdots X_{n}]$ tel que son degré, sa hauteur et la petitesse de son évaluation en $X_{i}=\omega_{i}$ vérifient les inégalités \eqref{degHQbiscor} et \eqref{evalQbiscor}, avec pour tout $N\in\mathbb{N}$ :
$$n(N)\geq c_{1}N^{l'+1}.$$

Afin de construire, à $N$ fixé, un polynôme $P_{N,k}(X_{0},\cdots, X_{n})=P_{k}(X_{0},\cdots, X_{n})\in\mathbb{K}[X_{0},\cdots, X_{n}]$ petit en $X_{i}=\omega_{i}$ (au sens de \eqref{evalQbiscor}), on commencera par construire un polynôme $R_{0}(z,X_{0},\cdots, X_{n})\in\mathbb{K}[z,X_{0},\cdots, X_{n}]$ prenant une petite valeur en $z=\alpha$, $X_{i}=\omega_{i}$. Pour ce faire, on cherchera $R_{0}(z,X_{0},\cdots, X_{n})$ tel que la fonction de $z$ : $R_{0}(z,1,f_{1}(z),\cdots, f_{n}(z))$ prenne une petite valeur en $z=0$. Ceci car en itérerant $k$ fois le système \eqref{syst_gen}, pour un nombre $k$ assez grand, on obtiendra, dans l'idée, un polynôme $R_{k}(z,X_{0},\cdots, X_{n})\in\mathbb{K}[z,X_{0},\cdots, X_{n}]$ tel que :

$$R_{k}(z,1,f_{1}(z),\ldots, f_{n}(z))=R_{0}(z^{d^{k}},1,f_{1}(z^{d^{k}}),\ldots, f_{n}(z^{d^{k}})),$$
dont l'évaluation en $z=\alpha, X_{i}=\omega_{i}$ sera de fait petite. On vérifiera alors que le polynôme $P_{k}(X_{0},\cdots, X_{n})$ défini par :
$$P_{k}(X_{0},\cdots, X_{n})=R_{k}(\alpha,X_{0},\cdots, X_{n})\in\mathbb{K}[X_{0},\cdots, X_{n}]$$
prend une petite valeur en $X_{i}=\omega_{i}$ et satisfait aux hypothèses du corollaire \ref{cas}.

\begin{enumerate}
\item
Construction de $R_{0}(z,X_{0},\ldots, X_{n})\in\mathbb{K}[z,X_{0},\cdots, X_{n}]$.
\medskip

Soit $N$ un entier naturel non nul, fixé jusqu'à la fin de cette preuve. Dans la suite de cette démonstration, pour tout $j\in\mathbb{N}$, le terme $c_{j}$ désignera une constante strictement positive ne dépendant que de $\mathbb{K},$ des $f_{i}(z)$ et de $\alpha$ et $c_{j}(N)$ désignera une constante strictement positive ne dépendant que de $\mathbb{K},$ des $f_{i}(z)$, de $\alpha$ et de $N$.

\medskip
Cherchons $R_{0}(z,X_{0},\ldots, X_{n})$ tel que la fonction de $z$: $R_{0}(z,1,f_{1}(z),\ldots, f_{n}(z))$ soit de grand ordre en $z=0$, ce qui garantira sa petitesse recherchée en $z=0$.
Par définition et quitte à renuméroter, on peut supposer que les fonctions $f_{1}(z), \ldots, f_{l'}(z)$ sont algébriquement indépendantes sur $\mathbb{K}(z).$ Il existe alors un polynôme non nul $R(z,X_{1},\ldots, X_{l'})\in  \mathbb{K}[z,X_{1},\ldots, X_{l'}]$ vérifiant les conditions suivantes :
\begin{enumerate}
\item
$deg_{z}(R)\leq N$
\item
$deg_{X}(R)\leq N$
\item
\begin{equation}
\label{ord}
n(N):=ord_{z=0}R(z,f_{1}(z),\ldots, f_{l'}(z))\geq \frac{N^{l'+1}}{l'!}:=c_{1}N^{l'+1}
\end{equation}
\end{enumerate}
En effet, en regardant $R$ comme un polynôme en les $X_{i}$ à coefficients des polynômes $P(z)$ de $\mathbb{K}[z]$, cela revient à résoudre un système de $\frac{N^{l'+1}}{l'!}$ équations en les $(N+1)\begin{pmatrix} l'+N \\ N \end{pmatrix}$ inconnues que sont les coefficients des polynômes $P(z)$, équations à coefficients dans $\mathbb{K}$. Puisque : $\frac{N^{l'+1}}{l'!}<(N+1)\begin{pmatrix} l'+N \\ N \end{pmatrix}$, il en existe bien une solution non triviale à coefficients dans $ \mathbb{K}.$\\ 
Notons :
$$E_{N}(z)=R(z,f_{1}(z),\ldots, f_{l'}(z))=\sum_{j=n(N)}^{+\infty}a_{j}(N)z^{j}.$$
Comme les fonctions $f_{1}(z),\ldots, f_{l'}(z)$ sont algébriquement indépendantes, $E_{N}(z)$ $\cancel\equiv$  $0$.

\item
Construction de $R_{k}(z,X_{0},\ldots, X_{n})\in\mathbb{K}[z,X_{0},\cdots, X_{n}]$.

Soit $R_{0}(z,X_{0},\ldots, X_{n})\in \mathbb{K}[z,X_{0},\ldots, X_{n}]$ le polynôme homogène de degré $N$ en $X_{0},\ldots, X_{n}$ vérifiant:

\begin{equation}
\label{homog}
R_{0}(z,1,X_{1},\ldots, X_{n})=R(z,X_{1},\ldots, X_{l'}).
\end{equation}

On va construire $R_{k}$ par récurrence sur $k$, en itérant l'application : $z\mapsto z^{d}$. En utilisant le système \eqref{syst_gen} vérifié par $\overline{f}(z)=(f_{1}(z),\ldots, f_{n}(z))^{t},$ et en notant $A_{i}(z)$ la $i$-ème ligne de la matrice $A(z)$ et $<.,.>$ le produit scalaire usuel sur $(C(z))^{n}$, on peut écrire :
\begin{equation*}
R_{0}(z^{d},1,f_{1}(z^{d}),\ldots, f_{n}(z^{d})) = R_{0}(z^{d},1,<A_{1}(z),\overline{f}(z)>,\ldots, <A_{n}(z),\overline{f}(z)>).
\end{equation*}

Comme les coefficients des lignes $A_{i}(z)$ sont des fractions rationnelles et que l'on veut manipuler des polynômes, on considère $a(z)$ un polynôme de $ \mathbb{K}[z]$ tel que pour tout $i\in\{1,\cdots, n\}$, $a(z)A_{i}(z)\in \mathbb{K}[z]^{n}$ et dont, pour tout $k\in\mathbb{N}$, $\alpha^{d^{k}}$ n'est pas un zéro (ce qui est possible puisque pour tout $k\in\mathbb{N}$, $\alpha^{d^{k}}$ n'est pas un pôle de $A(z)$).
Notons $M(z)=a(z)A(z)$ et $M_{i}(z)$ la $i$-ème ligne de la matrice $M(z)$.\\
Cela donne :
\begin{multline}
\label{recpoly}
R_{0}(z^{d},a(z),a(z)f_{1}(z^{d}),\ldots, a(z)f_{n}(z^{d})) \\ = R_{0}(z^{d},a(z),<M_{1}(z),\overline{f}(z)>,\ldots, <M_{n}(z),\overline{f}(z)>).
\end{multline}

On définit alors pour tout $k\geq 1$ :
\begin{equation}
\label{defrec}
R_{k}(z,X_{0},X_{1},\ldots, X_{n}) = R_{k-1}(z^{d},a(z)X_{0},<M_{1}(z),\overline{X}>,\ldots, <M_{n}(z),\overline{X}>).
\end{equation}

où 
$$\overline{X}=(X_{1},\cdots, X_{n})^{t}.$$
On remarque que pour tout $k\in\mathbb{N}$, le polynôme $R_{k}$ est homogène de degré $N$ en $X_{0},\ldots, X_{n}$. Donc en particulier :
$$deg_{X}(R_{k})=N, \text{ }\forall k\in\mathbb{N}.$$

De plus, on obtient par récurrence sur $k$ :

\begin{equation}
\label{egrecbase}
R_{k}(z,1,f_{1}(z),\ldots, f_{n}(z)) = \left(\prod_{j=0}^{k-1}a(z^{d^{j}})\right)^{N}E_{N}(z^{d^{k}}), \text{ }\forall k\in\mathbb{N}.
\end{equation}

Par ailleurs, un calcul rapide montre que pour tout $k$ assez grand par rapport à $N$, disons $k\geq c_{0}(N)$, on a :

\begin{equation}
\label{ENdiff0}
E_{N}(\alpha^{d^{k}}) \neq 0,
\end{equation}

et

\begin{equation}
\label{inegE}
-c_{7}d^{k}n(N)\leq log|E_{N}(\alpha^{d^{k}})|\leq -c_{8}d^{k}n(N).
\end{equation}

L'assertion \eqref{ENdiff0} et le fait que pour tout $k\in\mathbb{{N}}$, $\alpha^{d^{k}}$ n'est pas un zéro de $a(z)$ impliquent que :

\begin{equation}
R_{k}(\alpha, \omega_{0},\ldots, \omega_{n})\neq 0, \text{ } \forall k\geq c_{0}(N).
\end{equation}

Puis, en posant :
$$a(z)=z^{\nu}b(z), \text{ avec } \nu\in\mathbb{N}, \text{ } b(0)\neq 0,$$

on obtient à l'aide de \eqref{egrecbase} :
$$log|R_{k}(\alpha, \omega_{0},\ldots, \omega_{n})|=\nu log|\alpha|N\sum_{j=0}^{k-1}d^{j}+N\sum_{j=0}^{k-1}log|b(\alpha^{d^{j}})|+log|E_{N}(\alpha^{d^{k}})|, \text{ } \forall k\in\mathbb{N}.$$

Comme $b(z)$ est uniformément borné sur tout compact et que $b(0)\neq 0$, on a pour tout $k\in\mathbb{N}$ et pour tout $j\in\{0,\cdots, k-1\}$ :
\begin{equation}
\label{bineg}
- c_{9} \leq log|b(\alpha^{d^{j}})|\leq  c_{10}.
\end{equation}

En constatant que :
\begin{equation}
\label{somgeom}
\sum_{j=0}^{k-1} d^{j} = \frac{d^{k}-1}{d-1}\leq d^{k}, \text{ }\forall k\in\mathbb{N},
\end{equation}

et d'après \eqref{inegE} et \eqref{bineg}, il vient, pour tout $k\geq c_{0}(N)$ :

$$-c_{11}d^{k}N - c_{9}kN - c_{7}d^{k}n(N) \leq log|R_{k}(\alpha, \omega_{0},\ldots, \omega_{n})|\leq c_{10}kN -c_{8}d^{k}n(N).$$
En remarquant que :
$$kN=o_{k\rightarrow +\infty}(d^{k}n(N)),$$ 
et en utilisant \eqref{ord}, on obtient, quitte à augmenter $c_{0}(N)$, que pour tout $k\geq c_{0}(N)$:

\begin{equation}
\label{evalRk}
-c_{12}d^{k}n(N) \leq log|R_{k}(\alpha, \omega_{0},\ldots, \omega_{n})|\leq -c_{13}d^{k}n(N).
\end{equation}

Il nous reste à contrôler la hauteur de $R_{k}$. Pour ce faire, nous avons besoin d'étudier le degré en $z$ de $R_{k}$.\\
Notons $c_{14}$ le maximum des degrés du polynôme $a(z)$ et des polynômes de la matrice $M(z)$. Comme $R_{k-1}(z,X_{0},\cdots, X_{n})$ est homogène de degré $N$ en $X_{0},\cdots, X_{n}$, on obtient d'après \eqref{defrec} :
$$deg_{z}(R_{k})\leq c_{14}N + d\times deg_{z}(R_{k-1}), \text{ }\forall k\geq 1.$$
D'où par récurrence sur $k$ :

$$deg_{z}(R_{k})\leq c_{14}N\times \sum_{j=0}^{k-1} d^{j} +d^{k}deg_{z}(R_{0}), \text{ }\forall k\in\mathbb{N}.$$

D'après \eqref{somgeom} et comme $deg_{z}(R_{0})\leq N$, il vient :

\begin{equation}
\label{degzRk}
deg_{z}(R_{k})\leq c_{15}d^{k}N, \text{ }\forall k\in\mathbb{N}.
\end{equation}

On a par ailleurs besoin du lemme suivant.
\begin{lem}
\label{lemHRk}
Quitte à augmenter $c_{0}(N)$, on peut supposer que pour tout $k\geq c_{0}(N)$ :
\begin{equation}
\label{HRk}
h(R_{k})\leq c_{24}k^{3}.
\end{equation}
\end{lem}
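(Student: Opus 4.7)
The strategy is to induct on $k$ using the recursion \eqref{defrec}, and to establish an \emph{additive} per-step height inequality of the form
\[
h(R_k) \le h(R_{k-1}) + g(k, N),
\]
with $g$ polynomial in $k$ and $N$. Telescoping then produces a bound polynomial in $k$ whose $N$-dependent part is absorbed into the threshold $c_0(N)$.

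\medskip

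\noindent\textbf{Key estimate.} Writing $R_{k-1} = \sum_{\nu, \underline{\beta}} c_{\nu, \underline{\beta}}^{(k-1)}\, z^\nu X^{\underline{\beta}}$, the recursion gives
\[
R_k(z,X) = \sum_{\nu, \underline{\beta}} c_{\nu, \underline{\beta}}^{(k-1)}\, z^{d\nu}\, (a(z) X_0)^{\beta_0} \prod_{i=1}^n \langle M_i(z), \overline{X}\rangle^{\beta_i},
\]
so each coefficient of $R_k$ is a sum of products involving one coefficient of $R_{k-1}$ and finitely many coefficients of $a$ and of the matrix $M(z)$. To avoid the exponential blow-up that a direct application of Proposition \ref{liouville} to this sum would cause, I would argue place by place. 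At a non-archimedean place $w$, the Gauss norm $|P|_w := \max_{\alpha} |c^P_\alpha|_w$ is submultiplicative and ultrametric, giving $|R_k|_w \le |R_{k-1}|_w \cdot C_w^N$, where $C_w$ dominates $|a|_w$ and the entries of $M(z)$. At an archimedean place (in the number-field setting only), one picks up an additional factor equal to the number of monomials of $R_k$, itself at most $(c_{15} d^k N + 1)(N+1)^{n+1}$ by \eqref{degzRk}. Taking logarithms, weighting by $d_w/[\mathbb{K}:K]$, summing over places and invoking Proposition \ref{liouville} yields
\[
h(R_k) \le h(R_{k-1}) + C_1 N + C_2 (k + \log N),
\]
with $C_1, C_2$ depending only on $\mathbb{K}$, the $f_i$, and $\alpha$.

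\medskip

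\noindent\textbf{Conclusion.} Telescoping the recursion gives
\[
h(R_k) \le h(R_0) + C_1 N k + C_2\,\tfrac{k(k+1)}{2} + C_2 k \log N.
\]
The initial height $h(R_0)$ depends only on $N$: since $R_0$ is constructed as a nonzero solution of a homogeneous linear system in step (1) of the proof of Theorem \ref{nish_gen}, a Siegel-lemma-type argument (or Cramer's rule applied to a minor of the coefficient matrix) furnishes $h(R_0) \le c_{25}(N)$ for some function of $N$ alone. Enlarging $c_0(N)$ so that $k \ge c_0(N)$ forces $c_{25}(N)$, $Nk$, and $k \log N$ all to be $O(k^2)$, the bound collapses to $h(R_k) \le c_{24} k^2 \le c_{24} k^3$ with $c_{24}$ independent of $N$ and $k$, as required.

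\medskip

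\noindent\textbf{Main obstacle.} The crucial point is obtaining an \emph{additive} per-step inequality rather than a multiplicative one: a naive application of Proposition \ref{liouville} via $h(\sum_i a_i) \le \sum_i h(a_i)$ introduces a factor proportional to the number of monomials of $R_{k-1}$ (of order $d^{k-1}$), which would yield exponential rather than polynomial growth in $k$ and spoil the whole bootstrap. The place-by-place analysis using the ultrametric Gauss inequality at finite places is what makes the argument succeed; it also highlights the asymmetry between the two contexts, the function-field case carrying no archimedean contribution and hence admitting an even cleaner bound linear in $k$.
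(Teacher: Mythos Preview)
Your proposal is correct and follows the same overall strategy as the paper: obtain an additive per-step inequality $h(R_k)\le h(R_{k-1})+g(k,N)$ by a place-by-place estimate on the coefficients, then telescope and absorb the $N$-dependence into $c_0(N)$. The one noteworthy difference lies in how the sum over places is kept finite. The paper writes a single inequality valid at every place (with the monomial-count factor $c_{19}(N)c_{18}^{k}$ present everywhere) and then invokes the fact that all coefficients of the $R_k$ lie in a fixed ring of $S$-integers $A_S$, so that only the finitely many places in $S$ contribute; this yields $h(R_k)\le h(R_{k-1})+c_{23}k^{2}$ for $k$ large, hence $h(R_k)\le c_{24}k^{3}$. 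You instead split archimedean from non-archimedean places and use the ultrametric Gauss norm to suppress the combinatorial factor at the latter, picking it up only at the finitely many archimedean places; this gives the slightly sharper recursion $h(R_k)\le h(R_{k-1})+C_1N+C_2(k+\log N)$ and a final bound $O(k^{2})$. Either device resolves exactly the obstacle you identify (the danger of a multiplicative blow-up from naive use of $h(\sum a_i)\le\sum h(a_i)$), and both are adequate for the lemma.
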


\begin{proof}[Démonstration du lemme \ref{lemHRk}]
Pour tout $k\in\mathbb{N}$, posons :
$$R_{k}(z,X_{0},\cdots,X_{n})=\sum_{\underline{i}} a_{k,\underline{i}}z^{i_{n+1}}X_{0}^{i_{0}}X_{1}^{i_{1}}\cdots X_{n}^{i_{n}}, \text{ }a_{k,\underline{i}}\in\mathbb{K}.$$ 

Notons $\mathcal{E}\subset\mathbb{K}$ l'ensemble des coefficients du polynôme $a(z)$ et des polynômes de la matrice $M(z)$. On rappelle que l'on note $c_{14}$ le maximum des degrés du polynôme $a(z)$ et des polynômes de la matrice $M(z).$ 


On a d'après \eqref{defrec} :

\begin{equation}
\label{sommeU}
\sum_{\underline{i}}a_{k,\underline{i}}z^{i_{n+1}}X_{0}^{i_{0}}X_{1}^{i_{1}}\cdots X_{n}^{i_{n}} =\sum_{\underline{j}} a_{k-1,\underline{j}}z^{dj_{n+1}}(a(z)X_{0})^{j_{0}}(<M_{1}(z),\overline{X}>)^{j_{1}}\cdots(<M_{n}(z),\overline{X}>)^{j_{n}}.
\end{equation}

On pose :

$$a(z)=\sum_{s=0}^{c_{14}}a_{s}z^{s}, \text{ } a_{s}\in \mathbb{K},$$
$$M_{i}(z)=\left(\sum_{l=0}^{c_{14}}m_{i,j,l}z^{l}\right)_{1\leq j\leq n}, \text{ } m_{i,j,l}\in \mathbb{K}, i\in\{1,\cdots, n\},$$

\begin{align}
U_{k-1,\underline{j}}(z,X_{0},\cdots,X_{n}) &=a_{k-1,\underline{j}}z^{dj_{n+1}}(a(z)X_{0})^{j_{0}}(<M_{1}(z),\overline{X}>)^{j_{1}}\cdots(<M_{n}(z),\overline{X}>)^{j_{n}}\nonumber\\
&=a_{k-1,\underline{j}}z^{dj_{n+1}}\left(\sum_{s=0}^{c_{14}}a_{s}z^{s}X_{0}\right)^{j_{0}}\prod_{i=1}^{n}
\left(\sum_{j=1}^{n}\sum_{l=0}^{c_{14}}m_{i,j,l}z^{l}X_{j}\right)^{j_{i}}.
\label{U}
\end{align}

En développant \eqref{U} terme à terme, on obtient une somme d'au plus 
$$(c_{14}+1)^{j_{0}}((c_{14}+1)n)^{j_{1}+\cdots+j_{n}}\leq c_{16}^{N}$$
monômes en les $z,X_{0},\cdots, X_{n}$.

Par ailleurs, d'après \eqref{degzRk} et comme $R_{k-1}(z,X_{0},\cdots,X_{n})$ est homogène de degré $N$ en $X_{0},\cdots, X_{n}$, le polynôme $R_{k-1}(z,X_{0},\cdots,X_{n})$ possède au plus 
$$\begin{pmatrix}
N+n \\ n
\end{pmatrix}(c_{15}d^{k-1}N+1)\leq c_{17}(N)c_{18}^{k}$$
monômes en les $z,X_{0},\cdots, X_{n}$. Donc le développement terme à terme du membre de droite de \eqref{sommeU} produit une somme d'au plus 
$$c_{16}^{N}\times c_{17}(N)c_{18}^{k}\leq c_{19}(N)c_{18}^{k}$$
monômes en les $z,X_{0},\cdots, X_{n}$.

Donc chaque élément $a_{k,\underline{i}}z^{i_{n+1}}X_{0}^{i_{0}}X_{1}^{i_{1}}\cdots X_{n}^{i_{n}}$ du membre de gauche de \eqref{sommeU} est la somme d'au plus $c_{19}(N)c_{18}^{k}$ monômes du type 
$$uz^{i_{n+1}}X_{0}^{i_{0}}X_{1}^{i_{1}}\cdots X_{n}^{i_{n}}, u\in\mathbb{K},$$
fournis par le développement terme à terme du membre de droite de \eqref{sommeU}.
De plus, chaque coefficient $u$ de tels monômes est un produit de $N$ coefficients de $\mathcal{E}$ par un coefficient de $R_{k-1}(z,X_{0},\cdots, X_{n})$.

Par conséquent, pour toute place $w$ de $\mathbb{K},$ on peut écrire :
\begin{align}
max_{\underline{i}} |a_{k,\underline{i}}|_{w} &\leq c_{19}(N)c_{18}^{k} (max_{e\in \mathcal{E}} |e|_{w})^{N} \times max_{\underline{i}} |a_{k-1,\underline{i}}|_{w} \nonumber\\
& \leq c_{19}(N)c_{18}^{k} (max_{e\in \mathcal{E}} (1,|e|_{w}))^{N} \times max_{\underline{i}} (1,|a_{k-1,\underline{i}}|_{w}). \nonumber\\
\intertext{Donc}
log(max_{\underline{i}} (1,|a_{k,\underline{i}}|_{w})) & \leq log(c_{19}(N))+klog(c_{18}) + N log(max_{e\in \mathcal{E}} (1,|e|_{w})) \nonumber\\ 
\label{calc}
& \qquad + log(max_{\underline{i}} (1,|a_{k-1,\underline{i}}|_{w})).
\end{align}

On somme à présent \eqref{calc} sur toutes les places de $\mathbb{K}$. Remarquons tout d'abord qu'il n'y a en réalité qu'un nombre fini d'entre elles impliqué.

\begin{de}
\label{AS}
Soit $S$ un ensemble fini de places de $\mathbb{K}$ contenant l'ensemble des places archimédiennes de $\mathbb{K}$. On définit l'anneau des $S$-entiers, noté $A_{S}$, de la façon suivante :
$$A_{S}=\{x\in \mathbb{K}, |x|_{w}\leq 1, \forall w\notin S\}.$$
\end{de}

\begin{rem}
\label{placefinie}
\em{\label{somw}
Il existe un ensemble fini $S$ de places de $\mathbb{K}$ tel que :
$$\forall k,\forall \underline{i}, a_{k,\underline{i}}\in A_{S}.$$
En effet, il existe un ensemble fini $S$ de places de $\mathbb{K}$ tel que $A_{S}$ contienne l'ensemble des coefficients de $R_{0}(z,X_{0},\cdots, X_{n})$, de $a(z)$ et de ceux des polynômes de la matrice $M(z)$ puisque ceux-ci sont en nombre fini. D'après \eqref{defrec}, on voit par récurrence sur $k$ que pour tout $k\in\mathbb{N}$, chaque coefficient de $R_{k}$ est une somme de produits de coefficients des polynômes $R_{0}(z,X_{0},\cdots, X_{n})$, $a(z)$ et des polynômes de la matrice $M(z)$. La remarque \ref{placefinie} découle du fait que $A_{S}$ est un anneau. Nous noterons $c_{20}(N)$ le cardinal de $S$ (qui ne dépend pas de $k$) dans toute la suite.}
\end{rem}

En sommant \eqref{calc} sur toutes les places de $\mathbb{K}$ et en sachant qu'il y en a au plus $c_{20}(N)$ impliquées, on obtient :

\begin{align*}
h(R_{k}) &\leq c_{21}c_{20}(N)(log(c_{19}(N))+klog(c_{18})) + N c_{22} + h(R_{k-1})\\
&\leq c_{23}k^{2}+h(R_{k-1}), \text{ } \text{pour } k \text{ assez grand par rapport à } N.
\end{align*}
Puis par récurrence sur $k$ on obtient, quitte à augmenter $c_{0}(N)$, pour tout $k\geq c_{0}(N)$ :
$$h(R_{k})\leq c_{23}k^{3} + h(R_{0})\leq c_{24}k^{3}.$$
\end{proof}

\item
Elimination de la variable $z$
\medskip

Posons pour tout $k\in\mathbb{N}$ :
\begin{equation}
\label{defPk}
P_{k}(X_{0},\ldots, X_{n})=R_{k}(\alpha, X_{0},\ldots, X_{n})\in \mathbb{K}[X_{0},\ldots, X_{n}], \text{ }\forall k\in\mathbb{N}. 
\end{equation}

D'une part :
$$deg(P_{k})=deg_{X}(R_{k}) = N, \text{ } \forall k\in\mathbb{N}.$$
D'autre part, nous aurons besoin du lemme suivant.
\begin{lem}
\label{lemHPk}
Quitte à augmenter $c_{0}(N),$ on peut supposer que pour tout $k\geq c_{0}(N)$ :
$$h(P_{k})\leq c_{28}d^{k}N.$$

\end{lem}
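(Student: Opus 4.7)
L'idée centrale est d'exploiter la relation de définition $P_{k}(X_{0},\ldots, X_{n})=R_{k}(\alpha, X_{0},\ldots, X_{n})$ : le polynôme $P_{k}$ s'obtient à partir de $R_{k}$ par spécialisation en $z=\alpha\in\mathbb{K}$, et l'effet d'une telle spécialisation sur la hauteur se contrôle classiquement par $deg_{z}(R_{k})\cdot h(\alpha)$. L'approche consistera donc à combiner la borne \eqref{degzRk}, $deg_{z}(R_{k})\leq c_{15}d^{k}N$, et celle établie au lemme \ref{lemHRk}, $h(R_{k})\leq c_{24}k^{3}$, pour aboutir à une majoration du bon ordre en $d^{k}N$, où le terme dominant $c_{15}h(\alpha)d^{k}N$ proviendra précisément de la spécialisation.

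Concrètement, en reprenant l'écriture $R_{k}(z,X_{0},\ldots,X_{n})=\sum_{\underline{i}}a_{k,\underline{i}}z^{i_{n+1}}X_{0}^{i_{0}}\cdots X_{n}^{i_{n}}$ introduite au lemme \ref{lemHRk}, chaque coefficient de $P_{k}$ se présente sous la forme $b_{k,\underline{i}'}=\sum_{j=0}^{deg_{z}(R_{k})}a_{k,(\underline{i}',j)}\alpha^{j}$, avec $\underline{i}'=(i_{0},\ldots, i_{n})$. Pour toute place $w$ de $\mathbb{K}$, l'inégalité triangulaire (ou ultramétrique selon le cas) fournit :
$$|b_{k,\underline{i}'}|_{w}\leq (deg_{z}(R_{k})+1)^{\epsilon_{w}}\cdot max_{\underline{i}}|a_{k,\underline{i}}|_{w}\cdot max(1,|\alpha|_{w})^{deg_{z}(R_{k})},$$
avec $\epsilon_{w}=1$ si $w$ est archimédienne, $0$ sinon. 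En passant à $log\, max(1,\cdot)$, en prenant le maximum sur $\underline{i}'$, puis en sommant sur les places de $\mathbb{K}$ avec les poids $d_{w}/[\mathbb{K}:K]$, on obtient l'inégalité attendue :
$$h(P_{k})\leq log(deg_{z}(R_{k})+1)+h(R_{k})+deg_{z}(R_{k})\cdot h(\alpha).$$

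En injectant alors les deux bornes disponibles, il vient $h(P_{k})\leq log(c_{15}d^{k}N+1)+c_{24}k^{3}+c_{15}h(\alpha)d^{k}N$. Comme $log(d^{k}N)$ et $k^{3}$ sont tous deux négligeables devant $d^{k}N$ lorsque $k\rightarrow +\infty$ à $N$ fixé, quitte à augmenter $c_{0}(N)$, on peut supposer que pour tout $k\geq c_{0}(N)$ les deux premiers termes sont absorbés dans le troisième, livrant la borne annoncée $h(P_{k})\leq c_{28}d^{k}N$, avec $c_{28}$ ne dépendant que de $\mathbb{K},$ des $f_{i}$ et de $\alpha$.

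Je ne vois pas d'obstacle conceptuel sérieux : toute la difficulté technique (le contrôle de la hauteur malgré les $k$ itérations du système mahlérien et l'explosion du nombre de monômes) a déjà été surmontée au lemme \ref{lemHRk}. Le lemme \ref{lemHPk} en est un corollaire par spécialisation, l'essentiel étant de constater que la croissance exponentielle en $k$ de $deg_{z}(R_{k})\cdot h(\alpha)$ noie largement la croissance polynomiale en $k$ de $h(R_{k})$.
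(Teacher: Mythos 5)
Votre démonstration est correcte et suit essentiellement la même démarche que celle de l'article : majoration place par place des coefficients de $P_{k}=R_{k}(\alpha,\cdot)$ par le nombre de termes de la spécialisation, la puissance de $\alpha$ et les coefficients de $R_{k}$, puis sommation sur les places pour aboutir à $h(P_{k})\leq log(deg_{z}(R_{k})+1)+h(R_{k})+deg_{z}(R_{k})h(\alpha)$, et conclusion via \eqref{degzRk}, le lemme \ref{lemHRk} et le fait que $k^{3}=o_{k\rightarrow+\infty}(d^{k}N)$. La seule différence, cosmétique, est votre facteur $(deg_{z}(R_{k})+1)^{\epsilon_{w}}$ qui traite uniformément les places archimédiennes et ultramétriques, là où l'article majore ce terme par $c_{26}k$ et le somme sur les places de l'ensemble fini $S$.
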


\begin{proof}[Démonstration du lemme \ref{lemHPk}]
Pour tout $k\in\mathbb{N}$, notons $b_{k,\underline{j}}\in\mathbb{K}$ les coefficients du polynôme $P_{k}(X_{0},\cdots, X_{n})$ et rappelons que l'on note $a_{k,\underline{i}}\in\mathbb{K}$ les coefficients des polynômes $R_{k}(z,X_{0},\cdots, X_{n})$. D'après \eqref{defPk}, on voit qu'un coefficient de $P_{k}(X_{0},\cdots, X_{n})$ est une somme de produits d'une puissance de $\alpha$ par un coefficient de $R_{k}.$ D'après \eqref{degzRk}, le nombre de termes de cette somme est majoré par 
$$c_{15}d^{k}N + 1 \leq c_{25}d^{k}N.$$ 
Donc pour toute place $w$ de $\mathbb{K},$ on obtient :
\begin{align}
max _{\underline{j}}|b_{k,\underline{j}}|_{w} &\leq c_{25}d^{k}N \times max_{0\leq j\leq deg_{z}(R_{k})} |\alpha|_{w}^{j} \times max_{\underline{i}}|a_{k,\underline{i}}|_{w}\nonumber\\
&\leq c_{25}d^{k}N \times max_{0\leq j\leq deg_{z}(R_{k})} (1,|\alpha|_{w}^{j})\nonumber \\
& \qquad \times max_{\underline{i}}(1,|a_{k,\underline{i}}|_{w}) \nonumber\\
&\leq c_{25}d^{k}N \times (max(1,|\alpha|_{w}))^{deg_{z}(R_{k})} max_{\underline{i}}(1,|a_{k,\underline{i}}|_{w}). \nonumber
\intertext{Ainsi, pour tout $k$ assez grand par rapport à $N$, il vient :}
log(max_{\underline{j}}(1,|b_{k,\underline{j}}|_{w})) &\leq c_{26}k + deg_{z}(R_{k})log(max (1,|\alpha|_{w}))\nonumber\\
\label{b}
& \qquad + log(max_{\underline{i}}(1,|a_{k,\underline{i}}|_{w})).
\end{align}

On somme à présent \eqref{b} sur toutes les places de $\mathbb{K}$. Or, comme remarqué précédemment, chaque coefficient de $P_{k}(X_{0},\cdots, X_{n})$ est une somme de produits d'une puissance de $\alpha$ par un coefficient de $R_{k}(z,X_{0},\cdots, X_{n})$. Quitte à augmenter l'ensemble S de la remarque \ref{somw}, on peut supposer que $\alpha\in A_{S}$, ce qui implique que :
$$\forall k,\forall \underline{j}, b_{k,\underline{j}}\in A_{S}.$$

En sommant \eqref{b} sur les au plus $c_{20}(N)=card(S)$ places de $\mathbb{K}$ impliquées et quitte à augmenter $c_{0}(N)$, on obtient pour tout $k\geq c_{0}(N)$ :
 
\begin{align*}
h(P_{k}) &\leq c_{21}c_{20}(N) \times c_{26}k + c_{15}d^{k}N h(\alpha)  + h(R_{k}) \text{ d'après \eqref{degzRk}}\\
&\leq c_{21}c_{20}(N) \times c_{26}k + c_{15}d^{k}N h(\alpha)  + c_{24} k^{3} \text{ d'après \eqref{HRk}}\\
&\leq c_{27}k^{3} + c_{15}d^{k}N h(\alpha).
\end{align*}
En constatant que :
$$k^{3}=o_{k \rightarrow +\infty}(d^{k}N),$$
et quitte à augmenter $c_{0}(N),$ il vient pour tout $k\geq c_{0}(N) :$
$$h(P_{k})\leq c_{28}d^{k}N.$$
\end{proof}

Afin d'achever la démonstration du théorème \ref{nish_gen}, on remarque que, d'après \eqref{evalRk} et \eqref{defPk}, on a pour tout $k\geq c_{0}(N)$ :
$$-c_{12}d^{k}n(N) \leq log|P_{k}(\omega_{0},\ldots, \omega_{n})|\leq -c_{13}d^{k}n(N).
$$
On conclut la démonstration du théorème \ref{nish_gen} en appliquant le corollaire \ref{cas}, avec $s=l'$.

\end{enumerate}
\end{proof}

\begin{rem}
\em{
Cette démonstration contient quatre étapes classiques dans les preuves de l'indépendance algébrique de nombres $\omega_{i}$.
\smallskip

(i) Construction, pour chaque entier $N$, d'une fonction analytique de la variable $z$, dite fonction auxilliaire, ayant un grand ordre $n(N)$ (minoré en fonction de $N$) en $z=0$.
\medskip

(ii) Vérification du fait que cette fonction est non identiquement nulle.
\medskip

\noindent Pour chaque entier naturel $N$ fixé, (i) et (ii) permettent d'obtenir par récurrence (ici, itération $k$ fois de l'application $z\mapsto z^{d}$) des polynômes non nuls $\{P_{N,k}(X_{0},\cdots, X_{n})\}_{k}$ prenant de petites valeurs en $X_{i}=\omega_{i}$.
\medskip

(iii) et (iv) Pour chaque $N$ fixé, choix d'un polynôme $P_{N}(X_{0},\cdots, X_{n})=P_{N,k(N)}(X_{0},\cdots, X_{n})$ parmi les $\{P_{N,k}(X_{0},\cdots, X_{n})\}_{k}$ et majoration et minoration du nombre $|P_{N}(\omega_{0},\cdots,\omega_{n})|$ à l'aide de méthodes analytiques. L'ordre $n(N)$ apparaît naturellement dans la majoration et la minoration.
\medskip

On applique ensuite un critère d'indépendance algébrique du type du théorème \ref{philpoly} dans lequel on doit notamment vérifier la condition 5. La quantité $n(N)$ apparaissant naturellement dans (iii) et (iv), cette condition s'incarne, dans l'idée, en posant $t=N$ et $\epsilon(t)=\rho(t)=n(t)$. Or, il s'avère en général difficile d'étudier le comportement du quotient $n(N)/n(N+1)$ en fonction de $N$. Il est alors nécessaire de faire appel à un type de résultat souvent délicat à démontrer, appelé \textit{lemme de multiplicité}, qui fournit une majoration de $n(N)$ en fonction de $N$. Croisée avec la minoration de $n(N)$ considérée en (i), celle-ci permet de vérifier la condition 5 du théorème \ref{philpoly}. On retrouve par exemple ce schéma de démonstration dans \cite{B}.
\medskip

Ici, pour chaque $N$ fixé, on sait en fait majorer et minorer tous les nombres $\{|P_{N,k}(\omega_{0},\cdots,\omega_{n})|\}_{k}$, et pas simplement l'un d'entre eux, en (iii) et (iv). Pour chaque $N$ fixé, l'indice $k$ est donc libre de varier. Cela nous permet de poser $t=k$ et, dans l'idée, $\epsilon(t)=\rho(t)=n(N)d^{t}$ et de rendre le quotient $\epsilon(t)/\epsilon(t+1)$ facile à contrôler par rapport à $t$. Cela permet de vérifier directement la condition 5 du théorème \ref{philpoly} sans avoir recours à un \textit{lemme de multiplicité} qui compliquerait la démonstration. Cette particularité constitue un avantage notable de la méthode de Mahler.}

\end{rem}

\section{Analogue du théorème de Mahler pour les corps de fonctions en caractéristique non nulle}
\label{mahler}

Dans cette section, nous nous intéressons à des fonctions $f(z)\in\mathbb{K}[[z]]$ pour lesquelles il existe un entier $d\geq 2$ et deux polynômes $A(z,X), B(z,X)\in \mathbb{K}[z, X]$, $B(z,X)$ $\cancel\equiv$  $0$, tels que :
\begin{equation}
\label{defmahlrat}
f(z^{d})=\frac{A(z,f(z))}{B(z,f(z))}.
\end{equation}

On fixe pour la suite les notations suivantes :
$$A(z,X)=\sum_{i=0}^{m} a_{i}(z)X^{i}, B(z,X)=\sum_{i=0}^{m} b_{i}(z)X^{i},$$
où
$$a_{i}(z), b_{i}(z)\in \mathbb{K}[z], \forall 1\leq i\leq m \text{  et } m=max\{deg_{X}(A), deg_{X}(B)\}. $$
Enfin, $\Delta(z)\in\mathbb{K}[z]$ désignera le résultant des polynômes $A$ et $B$ par rapport à la variable $X$.

Pour ce type d'équation fonctionnelle, la définition d'un point régulier est la suivante.
\begin{de}
On dira qu'un nombre $\alpha\in C$ est régulier pour l'équation \eqref{defmahlrat} si pour tout $k\in\mathbb{N}$, $\alpha^{d^{k}}$ n'est pas un zéro de $\Delta(z)$.
\end{de}

Nous énonçons ci-dessous le théorème que nous souhaitons établir dans cette section. Celui-ci est valable pour tout choix préalable de l'un ou l'autre des deux contextes de la section \ref{contextes}. 

\begin{thm}
\label{thmmahl}
Soit $\mathbb{K}$ une extension finie de K. Soit $f(z)\in \mathbb{K}\{z\}$ une fonction analytique dans un voisinage $\mathcal{V}$ de l'origine inclus dans le disque unité de $C$ et solution de l'équation \eqref{defmahlrat}. On suppose de plus que l'on a :
$$m<d.$$
Soit $\alpha\in\overline{\mathbb{K}}\cap \mathcal{V}$ un nombre non nul régulier pour l'équation \eqref{defmahlrat}. 
\medskip

Alors, l'égalité suivante est vérifiée :
\begin{equation}
degtr_{\mathbb{K}}\{f(\alpha)\}=degtr_{\mathbb{K}(z)}\{f(z)\}.
\end{equation}

Autrement dit, si la fonction $f(z)$ est transcendante sur $\mathbb{K}(z)$, alors le nombre $f(\alpha)$ est transcendant sur $\mathbb{K}$ (et réciproquement).
\end{thm}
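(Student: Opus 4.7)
Le sens facile est immédiat : si $f(z)$ est algébrique sur $\mathbb{K}(z)$, alors par évaluation en $z=\alpha$ on obtient que $f(\alpha)$ est algébrique sur $\mathbb{K}$. Pour la réciproque, je supposerais $f(z)$ transcendante sur $\mathbb{K}(z)$ et, par l'absurde, $f(\alpha)\in\overline{\mathbb{K}}$. Je poserais $\mathbb{L}:=\mathbb{K}(\alpha,f(\alpha))$, extension finie de $K$. L'approche suit la méthode classique de Mahler, transposée dans le cadre commun de la section \ref{contextes} grâce à l'inégalité de Liouville de la proposition \ref{liouville}, valable aussi bien en caractéristique nulle que non nulle. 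Le critère de Philippon n'est pas utile ici puisqu'il s'agit seulement d'établir la transcendance d'un seul nombre.

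Pour tout entier $N$ assez grand, un comptage à la Siegel fondé sur la transcendance de $f(z)$ sur $\mathbb{K}(z)$ fournit un polynôme non nul $R(z,X)\in\mathbb{K}[z,X]$ vérifiant $deg_{z}R\leq N$, $deg_{X}R\leq N$, $h(R)\leq c_{1}N$, et
$$E_{N}(z):=R(z,f(z))=\sum_{j\geq n(N)}a_{j}z^{j},\qquad n(N)\geq \frac{N^{2}}{2}.$$
Par itération de l'équation \eqref{defmahlrat} on construit, pour tout $k\geq 0$, des polynômes $A_{k},B_{k}\in\mathbb{K}[z,X]$ tels que $f(z^{d^{k}})=A_{k}(z,f(z))/B_{k}(z,f(z))$, obtenus en substituant itérativement dans les homogénéisés en $X$ de $A,B$. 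Une récurrence directe donne
$$deg_{X}A_{k},\,deg_{X}B_{k}\leq m^{k},\quad deg_{z}A_{k},\,deg_{z}B_{k}\leq c_{2}d^{k},\quad h(A_{k}),\,h(B_{k})\leq c_{3}d^{k},$$
l'hypothèse $m<d$ intervenant ici de façon cruciale afin que $d^{k}$ domine les sommes géométriques du type $\sum_{j<k}d^{j}m^{k-1-j}$ issues de la récurrence. L'hypothèse de régularité $\Delta(\alpha^{d^{j}})\neq 0$ entraîne $B(\alpha^{d^{j}},f(\alpha^{d^{j}}))\neq 0$ à chaque étape (sinon l'équation fonctionnelle forcerait une racine commune de $A(\alpha^{d^{j}},X)$ et $B(\alpha^{d^{j}},X)$), si bien que l'itération est bien définie et $B_{k}(\alpha,f(\alpha))\neq 0$.

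Les points $\alpha^{d^{k}}$ restent dans $\mathcal{V}$ et tendent vers $0$ ; $E_{N}$ ayant un zéro isolé d'ordre $n(N)$ en $0$, on a $E_{N}(\alpha^{d^{k}})\neq 0$ pour $k\geq k_{0}(N)$, ainsi que
$$log|E_{N}(\alpha^{d^{k}})|\leq -c_{4}d^{k}n(N)\leq -c_{5}d^{k}N^{2}.$$
Sous l'hypothèse d'absurde, $E_{N}(\alpha^{d^{k}})\in\mathbb{L}^{\ast}$. Les estimations précédentes et la proposition \ref{liouville} fournissent
$$h(f(\alpha^{d^{k}}))\leq c_{6}d^{k},\qquad h(E_{N}(\alpha^{d^{k}}))\leq c_{7}Nd^{k},$$
puis, par l'inégalité de Liouville,
$$log|E_{N}(\alpha^{d^{k}})|\geq -[\mathbb{L}:K]\,h(E_{N}(\alpha^{d^{k}}))\geq -c_{8}Nd^{k}.$$
La confrontation des deux encadrements donne $c_{5}N^{2}\leq c_{8}N$, soit $N\leq c_{8}/c_{5}$, ce qui contredit la liberté de $N$. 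Donc $f(\alpha)$ est transcendant sur $\mathbb{K}$.

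Le principal obstacle est le contrôle soigneux des degrés et hauteurs dans l'itération du deuxième paragraphe : seule l'hypothèse $m<d$ garantit que la croissance $m^{k}$ du degré en $X$ reste dominée par la croissance $d^{k}$ de la hauteur, de sorte que la minoration de Liouville se révèle linéaire en $N$ (et non d'ordre $N^{2}$ ou pire), ménageant ainsi la marge quadratique apportée par l'ordre d'annulation $n(N)\geq N^{2}/2$ qui permet de conclure.
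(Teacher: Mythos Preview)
Your approach is correct and essentially the same as the paper's: construct an auxiliary polynomial $R$ with vanishing order $n(N)\gtrsim N^{2}$, iterate the functional equation, and play the analytic upper bound $\log|E_{N}(\alpha^{d^{k}})|\leq -c\,d^{k}N^{2}$ against the Liouville lower bound $\geq -c'\,d^{k}N$, the hypothesis $m<d$ being what keeps the latter linear in $N$. The only organisational difference is that the paper clears denominators at the level of $R$, defining $R_{k}(z,X)=R_{k-1}(z^{d},A/B)\,B^{m^{k-1}N}$ so that $R_{k}(z,f(z))=\bigl(\prod_{j}B(z^{d^{j}},f(z^{d^{j}}))^{m^{k-1-j}N}\bigr)E_{N}(z^{d^{k}})$, and applies Liouville to $R_{k}(\alpha,f(\alpha))$; you instead bound $h(f(\alpha^{d^{k}}))$ via the iterated $A_{k},B_{k}$ and apply Liouville directly to $E_{N}(\alpha^{d^{k}})$. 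The two computations are equivalent.

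One small point: your claim $h(R)\leq c_{1}N$ is not justified by a bare counting argument --- Siegel's lemma would require a priori height control on the Taylor coefficients of $f$, which is not assumed. But you do not need it: any bound $h(R)\leq c(N)$ depending only on $N$ (automatic once $R$ is chosen) suffices, since $c(N)$ is absorbed by $N d^{k}$ for $k\geq k_{0}(N)$. The paper proceeds exactly this way, allowing constants depending on $N$ and taking $k$ large.
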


Le cas où $\mathbb{K}$ est un corps de nombres est dû à Mahler \cite{M1} (voir aussi \cite[p.5]{N}). Nous nous proposons de démontrer ce théorème dans l'esprit de la démonstration du théorème \ref{nish_gen}. Celle-ci sera encore une fois valable aussi bien pour les corps de nombres que pour les corps de fonctions en caractéristique non nulle. La principale différence avec la démonstration du théorème \ref{nish_gen} provient du fait que notre étude ne porte plus sur l'indépendance algébrique mais seulement sur la transcendance. Nous n'aurons donc pas besoin d'utiliser un résultat aussi fort que le critère d'indépendance algébrique de Philippon. En remplacement, l'inégalité de Liouville (proposition \ref{liouville}, point 3), qui elle est élémentaire, suffira.

\begin{proof}
On suppose que la fonction $f(z)$ est transcendante sur $\mathbb{K}(z)$ (notons qu'alors $m>0$) et on veut montrer que le nombre $f(\alpha)$ est transcendant sur $\mathbb{K}$, la réciproque étant claire.
Quitte à remplacer $\mathbb{K}$ par $\mathbb{K}(\alpha)$, on supposera avoir $\alpha\in\mathbb{K}$.
Soit $N$ un entier naturel non nul fixé. Dans la suite de cette démonstration, pour tout $j\in\mathbb{N}$, le terme $c_{j}$ désignera une constante strictement positive ne dépendant que de $\mathbb{K},$ de $f(z)$ et de $\alpha$ et $c_{j}(N)$ désignera une constante strictement positive ne dépendant que de $\mathbb{K},$ de $f(z)$, de $\alpha$ et de $N$. Il existe un polynôme non nul $R(z,X)\in  \mathbb{K}[z,X]$ vérifiant les conditions suivantes :
\begin{enumerate}
\item
$deg_{z}(R)\leq N$
\item
$deg_{X}(R)\leq N$
\item
\begin{equation}
n(N):=ord_{z=0}R(z,f(z))\geq N^{2}
\end{equation}
\end{enumerate}
En effet, en regardant $R$ comme un polynôme en $X$ à coefficients des polynômes $P(z)$ de $\mathbb{K}[z]$, cela revient à résoudre un système de $N^{2}$ équations en les $(N+1)^{2}$ inconnues que sont les coefficients des polynômes $P(z)$, équations à coefficients dans $\mathbb{K}$. Puisque $N^{2}<(N+1)^{2}$, il en existe bien une solution non triviale à coefficients dans $ \mathbb{K}.$\\ 
Notons :
$$E_{N}(z)=R(z,f(z))=\sum_{j=n(N)}^{+\infty}a_{j}(N)z^{j}.$$
Comme $f(z)$ est transcendante, $E_{N}(z)$ $\cancel\equiv$  $0$.
\medskip

Posons $R_{0}(z,X)=R(z,X)$ et pour tout $k\geq 1 :$
\begin{equation}
\label{rec_origin}
R_{k}(z,X)=R_{k-1}\left(z^{d},\frac{A(z,X)}{B(z,X)}\right)B(z,X)^{m^{k-1}N}.
\end{equation}

On montre par récurrence sur $k$ que pour tout $k\in\mathbb{N}$, $R_{k}(z,X)\in\mathbb{K}[z,X]$ avec :
\begin{equation}
\label{degx}
deg_{X}(R_{k})\leq m^{k}N,
\end{equation}
et :
\begin{equation}
\label{prodmahl}
R_{k}(z,f(z))=\prod_{j=0}^{k-1} B\left(z^{d^{j}},f(z^{d^{j}})\right)^{m^{k-1-j}N}E_{N}(z^{d^{k}}).
\end{equation}

D'autre part, un calcul rapide montre que, pour tout $k$ assez grand par rapport à $N$, disons $k\geq c_{0}(N)$, on a :  
\begin{equation}
\label{ENdiff0mahl}
E_{N}(\alpha^{d^{k}})\neq 0,
\end{equation}

et :

\begin{equation}
\label{maj}
-c_{1}d^{k}n(N)\leq log|E_{N}(\alpha^{d^{k}})|\leq -c_{2}d^{k}n(N).
\end{equation}

Par ailleurs, l'équation \eqref{defmahlrat} entraîne que pour tout $k\geq 1$ :
$$B(\alpha^{d^{k-1}},f(\alpha^{d^{k-1}}))f(\alpha^{d^{k}})=A(\alpha^{d^{k-1}},f(\alpha^{d^{k-1}})),$$
Comme $\Delta(\alpha^{d^{k-1}})\neq 0$ par hypothèse, il vient :
\begin{equation}
\label{Balpha}
B(\alpha^{d^{k-1}},f(\alpha^{d^{k-1}}))\neq 0, \text{ } \forall k\geq 1.
\end{equation}

D'après \eqref{prodmahl}, \eqref{ENdiff0mahl} et \eqref{Balpha}, on trouve pour tout $k\geq c_{0}(N)$ :
\begin{equation}
\label{Rknonnul}
R_{k}(\alpha,f(\alpha))\neq 0.
\end{equation}

L'égalité \eqref{prodmahl} est l'analogue de \eqref{egrecbase} de la démonstration du théorème \ref{nish_gen} et on en déduit de la même façon à l'aide de \eqref{maj} que, quitte à augmenter $c_{0}(N)$ :
\begin{equation}
\label{valeurRk}
log|R_{k}(\alpha,f(\alpha))|\leq -c_{3}d^{k}n(N), \text{ } \forall k\geq c_{0}(N).
\end{equation}

Posons pour tout $k\in\mathbb{N}$ :
\begin{equation}
\label{defPk_origin}
P_{k}(X)=R_{k}(\alpha,X)\in\mathbb{K}[X].
\end{equation}
On a d'après \eqref{Rknonnul} et \eqref{valeurRk}, pour tout $k\geq c_{0}(N)$ :
\begin{equation}
\label{Pknonnul}
P_{k}(f(\alpha))\neq 0,
\end{equation}

et :

$$log|P_{k}(f(\alpha))|\leq -c_{3}d^{k}n(N).$$

Supposons par l'absurde que $f(\alpha)$ est algébrique sur $\mathbb{K}$. Quitte à remplacer $\mathbb{K}$ par $\mathbb{K}(f(\alpha))$, on peut dire que $f(\alpha)\in\mathbb{K}$. Cela et \eqref{Pknonnul} entraînent que $P_{k}(f(\alpha))\in\mathbb{K}^{*}$ pour tout $k\geq c_{0}(N)$. L'inégalité de Liouville fournit alors :

\begin{equation}
\label{liouH}
log|P_{k}(f(\alpha))|\geq -c_{4} h(P_{k}(f(\alpha))), \text{ } \forall k\geq c_{0}(N).
\end{equation}

Estimons à présent la hauteur $h(P_{k}(f(\alpha)))$. Etudions tout d'abord les quantités $deg_{z}(R_{k})$ et $h(R_{k})$.
\medskip

Par récurrence sur $k$, on obtient :

$$deg_{z}(R_{k})\leq c_{5}N\sum_{j=0}^{k-1} m^{j}d^{k-1-j} +d^{k}deg_{z}(R_{0}), \text{ }\forall k\in\mathbb{N},$$
où 
$$ c_{5}=max \{deg_{z}(A), deg_{z}(B)\}.$$

En remarquant que, pour tout $k\in\mathbb{N}$ :
\begin{align*}
\sum_{j=0}^{k-1} m^{j}d^{k-1-j} &=d^{k-1}\left(1+\frac{m}{d}+\cdots+\left(\frac{m}{d}\right)^{k-1}\right)\\
&\leq c_{6}d^{k-1}, \text{ car } m<d, 
\end{align*}

il vient :
\begin{equation}
\label{degz}
deg_{z}(R_{k}) \leq c_{7}d^{k-1}N+d^{k}N\leq c_{8}d^{k}N, \text{ } \forall k\in\mathbb{N}.
\end{equation}

De manière analogue à la démonstration du lemme \ref{lemHRk} dans la preuve du théorème \ref{nish_gen}, on peut montrer que l'on a ici, quitte à augmenter $c_{0}(N)$ :
\begin{equation}
\label{hmahl}
h(R_{k})\leq c_{9}(N)m^{k}+k^{3}, \text{ } \forall k\geq c_{0}(N).
\end{equation}

Posons pour tout $k\in\mathbb{N}$ :
\begin{equation}
\label{Pk_origin}
P_{k}(f(\alpha))=R_{k}(\alpha,f(\alpha))=\sum_{i\leq deg_{z}(R_{k}), j\leq deg_{X}(R_{k})} d_{k,i,j}\alpha^{i}f(\alpha)^{j}, \text{ } d_{k,i,j}\in\mathbb{K}.
\end{equation}

D'après \eqref{degx} et \eqref{degz}, pour toute place $w$ de $\mathbb{K}$, on peut écrire :
$$|P_{k}(f(\alpha))|_{w}\leq (c_{8}d^{k}N+1)(m^{k}N+1)\times \max_{i,j}|d_{k,i,j}|_{w} \times (max\{1,|\alpha|_{w}\})^{c_{8}d^{k}N}\times (max\{1,|f(\alpha)|_{w}\})^{m^{k}N}.$$

D'où :
\begin{align}
\label{calcalpha}
log(max\{1,|P_{k}(f(\alpha))|_{w}\}) &\leq c_{10}(N)+c_{11}k + log(\max_{i,j}\{1,|d_{k,i,j}|_{w}\}) + c_{8}d^{k}Nlog(max\{1,|\alpha|_{w}\})\nonumber \\
& \qquad + m^{k}Nlog(max\{1,|f(\alpha)|_{w}\}).
\end{align}

Comme $\alpha,f(\alpha)\in\mathbb{K}$, il existe un ensemble fini $S$ de places de $\mathbb{K}$ tel que $\alpha, f(\alpha)\in A_{S}$ (voir la définition \ref{AS}). D'après \eqref{Pk_origin}, quitte à augmenter l'ensemble $S$, on obtient de la même manière que pour la remarque \ref{placefinie} : 
$$\forall k\in\mathbb{N}, P_{k}(f(\alpha))\in A_{S}.$$

En posant $card(S)=c_{12}(N)$ et en sommant \eqref{calcalpha} sur les au plus $c_{12}(N)$ places de $\mathbb{K}$ impliquées, on trouve, quitte à augmenter $c_{0}(N)$, pour tout $k\geq c_{0}(N)$ :

\begin{align*}
h(P_{k}(f(\alpha)))&\leq c_{13}(N)k + h(R_{k}) + c_{8}d^{k}Nh(\alpha) + m^{k}Nh(f(\alpha))\\
&\leq c_{13}(N)k + c_{9}(N)m^{k} + k^{3} + c_{8}d^{k}Nh(\alpha) + m^{k}Nh(f(\alpha)), \text{ d'après \eqref{hmahl}}\\
&\leq c_{14}d^{k}N, \text { car } m<d.
\end{align*}

Ainsi, \eqref{liouH} devient :
\begin{equation}
\label{liou_appli}
log|P_{k}(f(\alpha))|\geq -c_{15}d^{k}N, \text{ } \forall k\geq c_{0}(N).
\end{equation}

D'après \eqref{valeurRk} et \eqref{liou_appli}, on obtient :
$$-c_{15}d^{k}N\leq log|P_{k}(f(\alpha))|\leq -c_{3}d^{k}n(N), \text{ } \forall k\geq c_{0}(N).$$
D'où :
$$-c_{15}d^{k}N\leq -c_{3}d^{k}n(N), \text{ } \forall k\geq c_{0}(N).$$

En divisant par $d^{k}$ les deux côtés de l'inégalité précédente, on obtient :
$$-c_{15}N\leq -c_{3}n(N), \text{ } \forall k\geq c_{0}(N).$$
Or, on a : 
$$n(N)\geq N^{2}.$$
En choisissant au début de cette preuve $N$ assez grand, on aboutit à une contradiction.
\medskip

Donc $f(\alpha)$ est transcendant, ce qui conclut la démonstration du théorème \ref{thmmahl}.
\end{proof}

Nous donnons à présent un exemple d'application à ce théorème. Soit $n\in\mathbb{N}$ tel que $p\nmid n$. Cela garantit que $\frac{1}{n^{k}}\in\mathbb{Z}_{p}$, pour tout entier naturel $k$. Le théorème de Lucas \cite{Lucas} (voir aussi \cite{AMFvdP}) permet alors de définir la fonction suivante :
\begin{equation}
\label{exmahl}
f(z)=\prod_{k=0}^{\infty} \left(1-Tz^{q^{k}}\right)^{\frac{1}{n^{k}}}.
\end{equation}
Notons que la fonction $f(z)$ est analytique sur $\{|z|<\frac{1}{q}\}$ et que $f(z)\in\mathbb{F}_{q}(T)[[z]]$. On a alors le théorème suivant.

\begin{thm} 
Si $n<q$, alors pour tout nombre $\alpha\in \overline{K}^{*}\cap \{|z|<\frac{1}{q}\}$, $f(\alpha)$ est un nombre transcendant sur $\mathbb{F}_{q}(T)$.
\end{thm}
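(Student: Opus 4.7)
L'idée est d'appliquer le théorème \ref{thmmahl} à $f$, avec $\mathbb{K}=\mathbb{F}_q(T)$ et $\mathcal{V}=\{|z|<1/q\}$. En substituant $z\mapsto z^q$ dans le produit définissant $f$ et en ré-indexant par $k\mapsto k+1$, on obtient
$$f(z^q) = \prod_{k=0}^{\infty}(1-Tz^{q^{k+1}})^{1/n^k} = \left(\prod_{j=1}^{\infty}(1-Tz^{q^j})^{1/n^j}\right)^n = \frac{f(z)^n}{(1-Tz)^n}.$$
Ainsi $f$ vérifie une équation du type \eqref{defmahlrat} avec $d=q$, $A(z,X)=X^n$, $B(z,X)=(1-Tz)^n$ et $m=\max(\deg_X A,\deg_X B)=n$, l'hypothèse $m<d$ devenant $n<q$, comme dans l'énoncé.

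Pour vérifier la régularité de $\alpha$, on note que $B$ étant constant en $X$, le résultant par rapport à $X$ vaut $\Delta(z)=\pm(1-Tz)^{n^2}$, dont l'unique racine est $z=1/T$. Comme $|1/T|_\infty=1/q$ et $|\alpha^{q^k}|_\infty=|\alpha|_\infty^{q^k}<(1/q)^{q^k}\leq 1/q$ pour tout $k\geq 0$, on a bien $\alpha^{q^k}\neq 1/T$ pour tout $k$, ce qui assure la régularité.

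Le cœur de la démonstration, et son obstacle principal, est l'étape de transcendance : il reste à prouver que $f(z)$ est transcendante sur $\mathbb{F}_q(T)(z)$, après quoi le théorème \ref{thmmahl} fournira aussitôt la transcendance de $f(\alpha)$. La stratégie est d'invoquer un théorème de Nishioka, valable en toute caractéristique, assurant qu'une série entière algébrique solution d'une équation mahlérienne comme \eqref{defmahlrat} est nécessairement rationnelle ; il suffit donc d'exclure la rationalité de $f$. Supposons par l'absurde $f=P/Q$ avec $P,Q\in\mathbb{F}_q(T)[z]$ premiers entre eux, et posons $\phi(a)=\mathrm{ord}_a P - \mathrm{ord}_a Q$ pour $a\in\overline{K}$. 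La factorisation $z^q-c=(z-c^{1/q})^q$ en caractéristique $p$, conjointement à l'équation $f(z)^n=(1-Tz)^n f(z^q)$, fournit la relation
$$n\phi(a)-q\phi(a^q)=\begin{cases}n&\text{si }a=1/T,\\ 0&\text{sinon.}\end{cases}$$
Itérée et combinée à $\gcd(n,q)=1$, elle impose $\phi(1/T)=1$ puis $\phi(T^{-1/q^k})=(q/n)^k$ pour tout $k\geq 1$. Or $\phi$ est à valeurs entières : si $n\geq 2$, le cas $k=1$ donne $q/n\notin\mathbb{Z}$, absurde ; si $n=1$, on trouve $\phi(T^{-1/q^k})=q^k\neq 0$ pour tout $k\geq 1$, forçant $P$ à avoir une infinité de zéros et contredisant son caractère polynomial. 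Dans tous les cas, $f$ n'est pas rationnelle, donc transcendante, et le théorème \ref{thmmahl} conclut.
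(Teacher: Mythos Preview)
Your derivation of the functional equation and the regularity check are correct and match the paper. The gap is in the transcendence step: you claim to invoke ``un th\'eor\`eme de Nishioka, valable en toute caract\'eristique'' reducing transcendence of $f$ to its irrationality. That reduction fails in characteristic $p$. A counterexample is $g(z)=\sum_{k\ge 0} z^{q^{k}}\in\mathbb{F}_q[[z]]$: it satisfies the Mahler equation $g(z^{q})=g(z)-z$ (hence an equation of type \eqref{defmahlrat} with $d=q$, $m=1<q$), it is not rational, yet the Frobenius identity $g(z)^{q}=g(z^{q})$ gives $g^{q}-g+z=0$, so $g$ is algebraic over $\mathbb{F}_q(z)$. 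Nishioka's theorem is a characteristic-zero statement; in positive characteristic it breaks precisely because of Frobenius. Your valuation argument therefore only proves that $f\notin K(z)$, which is not enough to feed into Theorem~\ref{thmmahl}.

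The paper bypasses this issue entirely: it observes that the infinite product exhibits infinitely many zeros of $f$ in $\overline{K}$ (at the points $T^{-1/q^{k}}$), which directly contradicts algebraicity, since a nonzero algebraic function over $K(z)$ can have only finitely many zeros. Your valuation computation is in fact close in spirit to this --- it detects exactly the same infinite family of points --- but as written it is formulated only for rational $f$ and would need to be recast for a general algebraic $f$ (for instance via places of the function field $K(z,f)$) to close the gap.
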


\begin{proof}

La fonction $f(z)$ satisfait à l'équation suivante :
\begin{equation}
\label{exeq}
f(z^{q})=\frac{f(z)^{n}}{(1-Tz)^{n}}.
\end{equation}

Par ailleurs, on remarque que la fonction $f(z)$ a une infinité de zéros sur $\overline{K}$, ce qui implique qu'elle est transcendante sur $K(z)$.
De plus, on observe que :
$$\Delta(z)=(1-Tz)^{n}.$$
Donc les seules singularités de l'équation \eqref{exeq} sont les racines $q^{k}$-ièmes de $\frac{1}{T}$, pour tout entier naturel $k$, dont aucune n'appartient à l'ensemble $\{|z|<\frac{1}{q}\}$.
Ainsi, comme $n<q$ par hypothèse, on peut appliquer le théorème \ref{thmmahl} avec :
$$d=q,\text{ } m=n,\text{ } A(z,X)=X^{n}, \text{ } B(z,X)=(1-Tz)^{n},$$

et $\alpha\in \overline{K}^{*}\cap \{|z|<\frac{1}{q}\}$ pour conclure que le nombre $f(\alpha)$ est transcendant sur $K$. 

\end{proof}

\section{Perspectives}
Si l'on considère un vecteur solution $(f_{1}(z),\cdots, f_{n}(z))$ du système mahlérien \eqref{syst_gen} et que l'on suppose la fonction $f_{1}(z)$ transcendante sur $\mathbb{K}(z)$, le théorème \ref{nish_gen} ne permet pas de conclure que le nombre $f_{1}(\alpha)$ est transcendant sur $\mathbb{K}$, où $\alpha$ est un nombre algébrique non nul régulier pour le système \eqref{syst_gen} (mais seulement qu'il existe un nombre transcendant parmi les $f_{i}(\alpha)$). Ce type d'implication nécessite un raffinement du théorème \ref{nish_gen} garantissant que s'il existe un polynôme homogène $P(X_{1},\cdots, X_{n})\in\mathbb{K}[X_{1},\cdots, X_{n}]$ tel que $P(f_{1}(\alpha),\cdots, f_{n}(\alpha))=0$, alors il existe un polynôme $Q(z,X_{1},\cdots, X_{n})\in\mathbb{K}[z,X_{1},\cdots, X_{n}]$ homogène en $X_{1},\cdots, X_{n}$ tel que $Q(z,f_{1}(z),\cdots, f_{n}(z))=0$ et $Q(\alpha,X_{1},\cdots, X_{n})=P(X_{1},\cdots, X_{n})$. Autrement dit, il serait intéressant d'établir que, sous les hypothèses du théorème \ref{nish_gen}, toute relation algébrique entre les nombres est obtenue comme spécialisation d'une relation algébrique entre les fonctions.
Cette précision a été apportée au théorème de Siegel-Shidlovskii par Beukers \cite{Beukers} en 2006. Pour les systèmes aux $\sigma$-différences, ce résultat est dû à Anderson, Brownawell et Papanikolas \cite{ABP1}. Une telle description dans le cadre du théorème de Ku. Nishioka pour les corps de nombres a été obtenue par Philippon \cite{Phil-S-S} en 2015 dans le cas de relations algébriques inhomogènes et parachevée par Adamczewski et Faverjon \cite {A-F} la même année. 
\bigskip

\textbf{Remerciements}. L'autrice tient à remercier Patrice Philippon pour lui avoir confirmé que son critère d'indépendance algébrique s'appliquait au cadre de cet article et pour avoir répondu avec rapidité, précision et bienveillance à ses innombrables questions à ce sujet; Federico Pellarin pour les pistes de réflexions et les éclaircissements qu'il lui a apporté.e.s autour des corps de fonctions en caractéristique non nulle; Evgeniy Zorin pour toutes les discussions enthousiastes et enrichissantes sur la théorie de l'élimination notamment et, évidemment, Boris Adamczewski pour ses nombreuses et patientes relectures et corrections de ce texte. Il est entendu que toute critique ne saurait être imputée qu'à la seule autrice de cet article.




\bibliographystyle{smfplain}
\bibliography{nishioka_en_car_p_texmaker}
\printindex

\nocite{P2}
\nocite{M2}
\nocite{M3}
\nocite{ABP2}
\nocite{P_elim}

\address

\end{document}